\newtheorem{theorem}{Theorem}
\newtheorem{lemma}[theorem]{Lemma}
\newtheorem{proposition}[theorem]{Proposition}
\newcommand{\E}{\mathbb{E}}
\newcommand{\N}{\mathbb{N}}
\newcommand{\Z}{\mathbb{Z}}
\newcommand{\p}{\mathbb{P}}
\newcommand{\R}{\mathbb{R}}
\newcommand{\CA}{\mathcal {A}}
\newcommand{\CC}{\mathcal {C}}
\newcommand{\CE}{\mathcal {E}}
\newcommand{\CM}{\mathcal {M}}
\newcommand{\CQ}{\mathcal {Q}}
\newcommand{\CH}{\mathcal {H}}
\newcommand{\supp}{\text{supp }}
\begin{document}

\title{Quasi-stationary Random Overlap Structures and the continuous Cascades}
\author{Jason Miller}
\email{jmiller@math.stanford.edu}
\address{
\noindent Stanford University,
Department of Mathematics,
Building 380,
Stanford, CA 94305
}
\thanks{Research supported in part by NSF grants DMS-0406042 and DMS-0806211.}
\date{\today}
\maketitle




\begin{abstract}
A random overlap structure (ROSt) is a measure on pairs $(X,Q)$ where $X$ is a locally finite sequence in $\R$ with a maximum and $Q$ a symmetric positive semidefinite matrix of overlaps intrinsic to the particles $X$.  Such a measure is said to be quasi-stationary provided that the joint law of the gaps of $X$ and overlaps $Q$ is stable under a stochastic evolution driven by a Gaussian sequence with covariance $Q$.  Aizenman et al. show in \cite{AS203} that quasi-stationary ROSts serve as an important computational tool in the study of the Sherrington-Kirkpatrick (SK) spin-glass model from the perspective of cavity dynamics and the related ROSt variational principle for its free energy.  In this framework, the Parisi solution is reflected in the ansatz that the overlap matrix exhibit a certain hierarchical structure.  Aizenman et al. pose the question in \cite{AS207} of whether the ansatz could be explained by showing that the only ROSts that are quasi-stationary in a robust sense are given by a special class of hierarchical ROSts known as both the Ruelle Probability Cascades as well the GREM.  Arguin and Aizenman give an affirmative answer in \cite{AA07} and \cite{A_GG07} in the special case that the set of values $S_Q$ taken on by the entries of $Q$ is finite.  We prove that this result holds even when $|S_Q| = \infty$ provided that $Q$ satisfies the technical condition that $\overline{S_Q}$ has no limit points from below.  This is a relevant step towards understanding the ground states of the SK model, as they satisfy $|S_Q| = \infty$.
\end{abstract}

\section{Introduction}
A random overlap structure (ROSt) is a measure on pairs $(X,Q)$ where $X = (X_1 \geq X_2 \geq \cdots)$ is a locally finite sequence in $\R$ with a maximum and $Q = (q_{ij})$ a symmetric positive semidefinite $\N \times \N$ matrix of overlaps with $q_{ii} = 1$ for all $i$.  The overlaps are intrinsic to the particles $X$ and serve to quantify the degree to which they are related.  A ROSt is said to be quasi-stationary if the joint law of the gaps $(X_i - X_{i+1})$ and overlaps $Q$ is stable under the stochastic evolution
\begin{eqnarray}
\label{eqn::qs_not_norm}
X_i \mapsto X_{\pi(i)} + \psi(\kappa_{\pi(i)}),\ Q = (q_{ij}) \mapsto Q^\pi := (q_{\pi(i) \pi(j)}),
\end{eqnarray}
where conditional on $Q$, $(\kappa_i)$ is a Gaussian sequence with covariance $Q$ and $\pi$ is a permutation that restores the descending order of $X$.  Here, we assume $\psi \in \CC$ where $\CC$ is the set of all functions in $C^2(\R)$ with $\psi'(0) \neq 0$ that have the property $\E \exp(\lambda \psi(Z)) < \infty$ for all $\lambda \in \R$ and $Z$ a standard Gaussian.  Note that the evolution is defined so that the overlap of any two given particles is the same both before and after one step of the evolution.
From the cavity perspective of the Sherrington-Kirkpatrick (SK) model (see \cite{AS207}), it is natural to restrict our attention to those ROSts satisfying
\begin{equation}
\label{eqn::exponential_sum}
 \sum_i e^{\beta X_i} < \infty \text{ for some } \beta > 0.
\end{equation}
If we let $\xi$ be the normalized process $\xi_i = e^{\beta X_i}/\sum_j e^{\beta X_j}$ then the quasi-stationarity of (the law of) $(X,Q)$ translates into the stability of (the law of) $(\xi,Q)$ under the stochastic mapping
\begin{eqnarray}\label{eqn::qs}(\xi,Q) \mapsto \left( \left(\frac{ \xi_i e^{\psi(\kappa_i)}}{\sum_j \xi_j e^{\psi(\kappa_j)}} \right)_{\downarrow}, Q^\pi \right) ,\end{eqnarray}
where $\downarrow$ indicates a reordering and $\pi$ is again the order restoring permutation.  We break ties by saying that particles which have equal weight in the evolved structure are ordered according to their position in the original structure.  In the case that $\xi_i \equiv 0$ for all $i$, we take the evolved weights also to be identically zero.  Let $d_{ij} = \delta_{ij} -q_{ij}$.  We say that $Q$ is ultrametric if $d_{ij} \leq \max(d_{ik},d_{kj})$ for all $i,j,k$.  Note that ultrametricity implies that the relation $i \sim_q j$ if and only if $q_{ij} \geq q$ is an equivalence.

Motivated by the cavity picture and the Parisi ansatz (see \cite{PA80}, \cite{TAL03}, \cite{TAL06}) for the SK model it is of considerable interest to characterize those structures that are quasi-stationary and to understand the relationship between quasi-stationarity and the ultrametricity of the overlap matrix.  The first step in this direction was taken by Ruzmaikina and Aizenman in \cite{RA05} where they show that, in the special case $Q$ is the identity matrix so that the Gaussian sequence $(\kappa_i)$ is iid, the quasi-stationary laws (without the normalization) are given by mixtures of Poisson processes with exponential intensities, i.e. of the form $ye^{-ys} ds$ with $y > 0$.  The Ruzmaikina-Aizenman theorem holds under the technical condition that there exists constants $A, r > 0$ such that
\begin{equation}
\label{eqn::expected_bound}
\E|\{ n : X_1 - X_n \leq d\}| \leq A e^{rd},
\end{equation}
where $|\cdot|$ denotes the cardinality of a set.  Recall that $\xi$ is said to follow the Poisson-Dirichlet distribution with parameter $x \in (0,1)$, denoted by $\xi \sim PD(x,0)$, if $(\xi_i) \stackrel{d}{=} (X_i / \sum_j X_j)$, where $X = (X_1 \geq X_2 \geq \cdots \geq 0)$ is a Poisson process with intensity $xs^{-x-1}ds$.  Note that densities of the latter form arise from the former via an exponential change of variables.
In \cite{A07}, Arguin shows that the Ruzmaikina-Aizenman theorem is also true when the hypothesis \eqref{eqn::expected_bound} is replaced by \eqref{eqn::exponential_sum} which implies that the normalized quasi-stationary ROSts $(\xi,Q)$ with $Q$ the identity matrix are exhausted by the mixtures of $PD(x,0)$ variables.  In this article we restrict our attention to quasi-stationarity in the normalized case.

For $r \in \N$, let $Q^{*r} = (q_{ij}^r)$ denote the $r$th Schur power of $Q$, i.e. entrywise product of $Q$ with itself $r$ times.  If $Q$ is symmetric and positive semidefinite then $Q^{*r}$ is as well \cite{HJ85}.  Fix $\psi \in \CC$ and for $r \in \N$, $\lambda \in \R$, let $\Phi_{r,\lambda}$ be the stochastic map,
\[ \Phi_{r,\lambda}(\xi,Q) := \left(\left( \frac{\xi_i e^{\lambda \psi(\kappa_i)}}{\sum_j \xi_j e^{\lambda \psi(\kappa_j)}} \right)_\downarrow, Q^\pi \right),\]
 where the Gaussian sequence $(\kappa_i)$ is generated with covariance $Q^{*r}$.  We say that a ROSt is $(r,\lambda)$-quasi-stationary for $\psi$ if its law is stable $\Phi_{r,\lambda}$.  A ROSt is said to be robustly quasi-stationary (RQS) provided that it is $(r,\lambda)$-quasi-stationary for all $r \in \N$ and $\lambda > 0$.  Let $S_Q = \{q_{ij} : i,j \in \N, i \neq j\}$ denote the state space of $Q$ and $S_Q(i) = \{q_{ij} : j \neq i, j \in \N\}$.  The overlap matrix $Q$ is said to be indecomposable provided that $S_Q(i) = S_Q$ for every $i$; this condition represents a sort of self-similarity of the particles which is natural in the context of the SK model.  In \cite{AA07}, Arguin and Aizenman show that if $(\xi,Q)$ is RQS, ergodic for the correlated evolution with $S_Q \subseteq (-1,1)$, $|S_Q| < \infty$, and $Q$ indecomposable then $(\xi,Q)$ is given by a finite level Ruelle Probability Cascade (RPC), a special type of ROSt with ultrametric overlaps arising from a hierarchy of Poisson processes which we describe in the next section.  In the case that $Q$ is not indecomposable, there is a natural equivalence between particles where $\xi_i \sim \xi_j$ if and only if $S_Q(i) = S_Q(j)$.  As the set $S_Q(i)$ is intrinsic to $\xi_i$, this equivalence is invariant under the evolution so that the equivalence classes are themselves RQS, ergodic, and indecomposable and hence given by RPCs \cite{AA07}.  In \cite{A_GG07}, Arguin shows that the assumption of indecomposability can be removed by arguing that the equivalence classes have the same state space and hence must be equal.  By convexity, the results of \cite{AA07} and \cite{A_GG07} imply that the set of RQS ROSts with finite state space are given by superpositions of finite level RPCs.  Our main theorem is that this result holds even if $|S_Q| = \infty$ provided that $S_Q$ satisfies the technical condition that $\overline{S_Q}$ does not have limit points from below:
\begin{theorem}
\label{thm::main_theorem}
If $(\xi,Q)$ is a robustly quasi-stationary random overlap structure with $S_Q \subseteq (-1,1)$ and ergodic for the evolution \eqref{eqn::qs} such that if $q \in \overline{S_Q}$ then $\sup\{ p \in \overline{S_Q}, p < q\} < q$, then $(\xi,Q)$ is given by a continuous Ruelle Probability Cascade.  In particular, all such laws are ultrametric and satisfy $S_Q \subseteq [0,1)$.
\end{theorem}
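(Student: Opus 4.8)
The plan is to bootstrap the theorem from the finite state space case of \cite{AA07,A_GG07} by a decomposition organized along the gap structure of $\overline{S_Q}$. The hypothesis that no point of $\overline{S_Q}$ is a limit from below says precisely that each $q\in\overline{S_Q}$ sits atop a genuine gap: writing $q^-:=\sup\{p\in\overline{S_Q}:p<q\}$, we have $q^-<q$. Consequently $\overline{S_Q}$ is a countable, compact, scattered set, its iterated Cantor--Bendixson derivatives reach $\emptyset$ after $m+1$ steps for some $m\ge0$ with $\overline{S_Q}^{(m)}$ a nonempty finite set, and one can run a transfinite induction on the pair $(m,\,|\overline{S_Q}^{(m)}|)$ ordered lexicographically, the base case $m=0$ being exactly $|S_Q|<\infty$.

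Two facts make the inductive step go. First, $Q$ is almost surely ultrametric and $S_Q\subseteq[0,1)$; this is obtained by adapting the robustness argument of \cite{AA07} that forbids a violating triple of tagged particles or a pair with negative overlap --- iterating $\Phi_{r,\lambda}$, averaging the Gaussian increments, and varying the Schur power $r$ and the strength $\lambda$ yields effective overlap kernels incompatible with such configurations uniformly in $r$, and nothing there uses $|S_Q|<\infty$. Second, given a cut level $a\in(0,1)\setminus\overline{S_Q}$, ultrametricity makes $i\sim_a j\iff q_{ij}>a$ a genuine equivalence whose class overlaps form a principal submatrix of $Q$ (hence positive semidefinite), so one may form the coarse-grained ROSt $\Theta_a(\xi,Q)$ --- collapse each $\sim_a$-class to a super-particle carrying the total weight of the class --- and, for each class, a within-cluster ROSt by restricting $(\xi,Q)$ to the class and renormalizing. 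Since the dynamics \eqref{eqn::qs} preserves every fixed overlap, and hence every $\sim_a$-class, both $\Theta_a(\xi,Q)$ and the within-cluster ROSts are again robustly quasi-stationary and ergodic --- the same heredity mechanism used for the indecomposability reduction in \cite{AA07} --- with state spaces contained in $S_Q\cap(-1,a)$ (with $1$ adjoined) and in $S_Q\cap(a,1)$ respectively.

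For the inductive step write $\overline{S_Q}^{(m)}=\{p_1<\cdots<p_s\}$. When $s\ge2$, take $a$ in the gap $(p_s^-,p_s)$: then $\Theta_a(\xi,Q)$ has top stratum $\{p_1,\dots,p_{s-1}\}$ and the within-cluster ROSts have top stratum $\{p_s\}$, so both sit strictly below $(m,s)$ in the lexicographic order and are cascades by induction; the gap at $a$ --- which forces the strict separation $\min(\overline{S_Q}\cap(a,1))>\max(\overline{S_Q}\cap(-1,a))$ --- is what upgrades the marginal agreement to a genuine Poisson-cascade composition of the two layers, as in the finite-level case, so $(\xi,Q)$ is a cascade. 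When $s=1$, say $\overline{S_Q}^{(m)}=\{p\}$, take $a\in(p^-,p)$ so that $\Theta_a(\xi,Q)$ has strictly smaller Cantor--Bendixson rank and is a cascade; the within-cluster overlaps now accumulate at $p$ from above, and I would construct their cascade as the limit of the cascades attached to the truncations $\overline{S_Q}\cap[b_n,1)$, $b_n\downarrow p$, each of which drops the rank to at most $m$ and so is handled by the induction. Threading this through all of $\overline{S_Q}$ identifies $(\xi,Q)$ with the continuous Ruelle Probability Cascade determined by $\overline{S_Q}$, and the ultrametricity of $Q$ and $S_Q\subseteq[0,1)$ have already been recorded.

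The main obstacle is the limiting step in the case $s=1$ --- equivalently, the passage from all finite truncations of $(\xi,Q)$ to $(\xi,Q)$ itself --- which is forced on us precisely because $|S_Q|=\infty$. One must show that the finite-level cascades attached to $\overline{S_Q}\cap[b_n,1)$ converge, in a topology making the maps $\Phi_{r,\lambda}$ continuous, to a single cascade; that no mass escapes between the accumulating levels; and that the limiting overlap matrix stays ultrametric. This is exactly where the gap hypothesis is indispensable: accumulation in $\overline{S_Q}$ occurs only from above into points with a clean gap beneath them, so the infinitely many levels piling up at such a point fuse into one continuous stratum inserted cleanly above that gap, and the uniform spacing of consecutive truncation levels near any $q\in\overline{S_Q}$ delivers the tightness that identifies the limit uniquely. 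Drop the hypothesis and the clusterings at different levels can interleave as $n\to\infty$, with the limit failing to be ultrametric; so the theorem genuinely uses it.
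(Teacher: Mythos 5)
Your architecture is genuinely different from the paper's, and unfortunately it has gaps at exactly the places where you acknowledge uncertainty. The paper does not decompose $(\xi,Q)$ along the gap structure and run an induction on Cantor--Bendixson rank; instead it approximates $(\xi,Q)$ from the \emph{outside}: for a non-decreasing right-continuous step function $f$ with $f(x)\le x$ it shows that $(\xi,f(Q))$ is again RQSE with \emph{finite} state space (Proposition~\ref{prop::change_state_space}), then applies the finite-state-space theorem of \cite{AA07,A_GG07} to conclude $(\xi,f(Q))$ is a finite-level RPC, and finally sends $f\to\mathrm{id}$ using two structural facts established beforehand: the set of RQS laws is closed (Proposition~\ref{prop::rqs_closed}) and the closure of the finite-level RPCs (with fixed Poisson--Dirichlet parameter) is exactly the set of continuous RPCs (Proposition~\ref{prop::rpc_homeo}). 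The engine behind the approximation is the directing-measure machinery: under the gap hypothesis the directing measure is supported on a non-random sphere (Lemma~\ref{lem::dir_sphere}) and is purely atomic (Lemma~\ref{lem::dir_det}), which makes the scaling identity $\nu(d\phi)\stackrel{d}{=}\tfrac{e^{\lambda\kappa(T_r\phi)}}{N}\nu(d\phi)$ (Proposition~\ref{prop::rescale_direct}) usable level by level.

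Against this, several steps in your proposal are not justified and at least one is based on a false premise. (i) You declare ultrametricity and $S_Q\subseteq[0,1)$ at the outset by ``adapting the robustness argument'' and asserting that ``nothing there uses $|S_Q|<\infty$.'' That is not how \cite{AA07,A_GG07} obtain ultrametricity: there it is a \emph{consequence} of identifying the ROSt with an RPC through the directing-measure induction, which uses finiteness essentially; no direct triple-exclusion argument is available, and the paper likewise obtains ultrametricity only as a corollary at the very end. (ii) The assertion that $\Theta_a(\xi,Q)$ and the within-cluster ROSts are RQSE is stated without proof; the analogous heredity in \cite{AA07} (the directing ROSt $(\eta,P)$ is RQSE) is nontrivial and rests on the marked Poisson--Dirichlet calculus of their Lemma~2.7 and on knowing $\nu$ is atomic --- which in turn is where the gap hypothesis enters via Lemma~\ref{lem::dir_det}. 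Your coarse-graining at an arbitrary cut $a$ would require iterating that construction, which is in effect what the paper does, but you cannot take it for granted. (iii) The $s=1$ limiting step, which you flag as the ``main obstacle,'' remains unresolved, and you ask for ``a topology making the maps $\Phi_{r,\lambda}$ continuous.'' The paper explicitly points out that $\Phi_{r}$ is \emph{not} continuous in the compact topology on $\CM_1(\Omega_{os})$ (the single-particle example $\xi^n=(1/n,0,\dots)$), and compactness is needed for the ergodic decomposition, so one cannot simply switch to $\ell^1$. Instead the paper proves closedness of RQS directly (Proposition~\ref{prop::rqs_closed}) by exploiting that the $\xi$-marginal of an RQS ROSt is a mixture of Poisson--Dirichlet laws and is independent of $Q$. (iv) Finally, ``the cascades attached to the truncations $\overline{S_Q}\cap[b_n,1)$'' is not a well-defined operation on a ROSt; the correct substitute is precisely $(\xi,f(Q))$ for a step function $f$, and establishing that this preserves RQSE is the substance of Section~\ref{sec::approx}, not something that can be bypassed.

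In short, the decomposition-and-induction route might be made to work, but as written it leans on three unproved claims (upfront ultrametricity, heredity of RQSE under coarse-graining/restriction, and a convergence step based on a continuity property that fails), and would in any case need the directing-measure lemmas of Section~\ref{sec::approx} to justify its steps --- at which point one might as well run the paper's shorter outside-approximation argument directly.
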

\noindent The condition of robustness is necessary to single out the cascades.  At the end of the next section, we will show that for each fixed $R \in \N$ there exists a ROSt that is $(r,\lambda)$-quasi-stationary with respect to $\psi(x) = x$ for all $\lambda > 0$ and all $r \leq R$ that is not an RPC.  The relaxation of the restriction that $S_Q$ is finite is a relevant step towards the understanding of the ground states of the SK model as in this case $|S_Q| = \infty$.  Note that the assumption that $\overline{S_Q}$ does not have limit points from below allows for $S_Q$ to be rather complicated.  For example, the number of cluster points of $S_Q$ could be infinite.

The proof in \cite{AA07} is by induction on $|S_Q|$ and has the following structure.  First, assume that $\psi(x) = \lambda x$ is linear.  Taking a limit as $r \to \infty$ the non-diagonal entries of $Q^{*r}$ tend to zero from which it follows that RQS implies quasi-stationarity under the free evolution, the evolution driven by an iid Gaussian sequence $(\kappa_i)$.  The latter gives that $\xi$ is independent of $Q$ and, by \cite{A07}, $\xi$ follows a $PD(x,0)$ distribution.  By looking at the past increments, it can be deduced that $Q$ is weakly exchangeable.  Hence by a version of de Finetti's theorem for weakly exchangeable covariance matrices proved in \cite{DS82}, there exists a random measure $\nu$ on a separable Hilbert space $\CH$ so that conditional on $\nu$ the distribution of $Q=(q_{ij})$ is given by $q_{ij} \stackrel{d}{=} (\phi_i,\phi_j) + \delta_{ij} (1-\|\phi_i\|^2)$ where $(\phi_i)$ is a $\nu$-iid sequence; we refer to $\nu$ as the \emph{directing measure} of $(\xi,Q)$.   That $S_Q$ is finite and indecomposable is used to conclude that $\nu$ admits the representation $\nu = \sum_j \eta_j \delta_{\psi_j}$ where $\| \psi_j\|^2 = \sup S_Q < 1$ is constant for all $j$.  Viewing $(\xi,Q)$ as a marked Poisson-Dirichlet process with marking measure $\nu$, RQS implies that the ROSt $(\eta,P)$, $P =(p_{ij})$, $p_{ij} = (\psi_i,\psi_j)/\sup S_Q$ is itself RQS but with $|S_P| = |S_Q|-1$.  The induction step follows as $(\xi,Q)$ can be reconstructed from $(\eta,P)$ in exactly the same way that a $k$-level RPC can be constructed from a $k-1$ level RPC (see Proposition \ref{prop::rpc_construction}).  The case that $\psi$ is non-linear can be reduced to the linear case via an argument with the central limit theorem.

A new proof is required in the infinite case for two reasons.  First, the argument that the directing measure is countably supported on a non-random sphere crucially depends on the indecomposability of $Q$.  It turns out that this holds more generally, but requires a different argument.  Second, the induction fails in the infinite case due to the possibility of many cluster points in $S_Q$.  Our approach is as follows. We use a continuity argument in Section \ref{sec::closure} to show that the closure of the set of finite level RPCs is the set of continuous RPCs provided we equip the space of ROSts with an appropriate topology.  Next, we show in Section \ref{sec::approx} that ergodic RQS ROSts with state space that has no limit points from below can be approximated arbitrarily well by such ROSts with indecomposable finite state space.  This step is non-trivial since it requires constructing an approximation that remains a fixed point of an infinite collection of maps.

\section{Ruelle Probability Cascades}

In this section we recall the construction of the finite level RPCs given by Ruelle in \cite{RU87}, following closely that given in \cite{AA07}.  Fix $k \in \N$ and $2k$ parameters
\[ 0 = q_0 \leq q_1 < q_2 < \cdots <  q_{k+1} =1,\]
and
\[ 0 = x_0 <  x_1 < x_2 < \cdots < x_{k+1} = 1.\]
Define a right-continuous function on $[0,1]$,
\[ x(q) =  \sum_{l=0}^k x_l 1_{[q_l,q_{l+1})}(q)\]
and let $q(x)$ be the right continuous inverse of $x(q)$:
\[ q(x) = \inf\{ t : x(t) > x\} = \sum_{l=1}^{k+1} q_l 1_{[x_{l-1},x_l)}(x).\]
One often specifies $x(q)$ as the order parameter of an RPC, but for us it will be more convenient to work with $q(x)$.  This perspective is without loss of generality as the association of $x(q)$ with $q(x)$ is injective.  Let $\alpha = (\alpha_1,\ldots,\alpha_k) \in \N^k$.  Denote by $\alpha(l) = (\alpha_1,\ldots,\alpha_l)$ the truncation of $\alpha$ up to its $l$th component; set $\alpha(0) = 0$.  For each $\alpha(l)$, $0 \leq l \leq k-1$, let $\eta^{\alpha(l)}$ be an independent Poisson process with intensity $x_{l+1} s^{-x_{l+1}-1} ds$.  We denote by $\eta_j^{\alpha(l)}$ the $j$th largest element of $\eta^{\alpha(l)}$.  Now define the point process,
\[ \eta = (\eta_\alpha : \alpha \in \N^k) = (\eta_{\alpha_1}^0 \eta_{\alpha_2}^{\alpha(1)} \cdots \eta_{\alpha_k}^{\alpha(k-1)} : \alpha \in \N^k).\]
As each $\eta^{\alpha(l)}$ is summable it follows that $\eta$ is summable and can be ordered.  Let
\[ N = \sum_{\alpha} \eta_\alpha \text{ and } \xi = \left( \frac{1}{N} \eta_\alpha \right)_{\downarrow}\]
be the normalization of the ordering of $\eta$ to have unit sum.  Fix $i,j$.  Then $\xi_i = N^{-1} \eta_\alpha$ and $\xi_j = N^{-1} \eta_\beta$ for some $\alpha,\beta \in \N^k$.  Let $\sim_l$ be the equivalence relation on $\N$ given by $i \sim_l j$ if and only if the truncations $\alpha(l)$ and $\beta(l)$ agree.  Let $l(i,j)$ be the largest $l$ such that $i \sim_l j$, set 
\[q_{ij} = q_{l(i,j)+1} = q(x_{l(i,j)}),\]
and let $Q = (q_{ij})$.  Then $(\xi,Q)$ defines a ROSt, which we refer to as a $k$-level RPC; we will also refer to $(\xi,Q)$ as a finite level RPC if we do not wish to specify the number of levels.  We can think of the association of the parameter $q(x)$ with $(\xi,Q)$ given by the RPC with order parameter $q(x)$ as a map from the set of finite valued non-decreasing right-continuous step functions on $[0,1]$ into the space of ROSts.  In the sequel we will prove that if we equip these spaces with appropriate (and natural) topologies, this map is a homeomorphism when the parameter $x_k$ is fixed but the number of levels $k$ is allowed to vary.

Let 
\[u_1 = \log\left(\frac{x_k}{x_1}\right), u_2 = \log\left( \frac{x_k}{x_2}\right),\ldots,u_k = \log\left(\frac{x_k}{x_k}\right) = 0\]
and denote by $\Gamma_{u_j}$ the equivalence classes of $\sim_j$ in $\N$.  Then we can think of $(\Gamma_{u_j} : j=k,\ldots,1)$ as a process on $\CE(\N)$, the set of equivalence relations on $\N$.  It turns out that $(\Gamma_{u_j})$ is the discrete skeleton of a continuous time Markov process  $(\Gamma_u : u \geq 0)$ with initial distribution the equivalence given by equality and semigroup $R_u(\Gamma,d\Gamma')$, $u \geq 0$, given as follows (Proposition 1.1, Theorem 1.2, Theorem 2.2 \cite{BS98}).  When $u = 0$, $R_u(\Gamma,d\Gamma')$ is the $\delta$ mass supported at $\Gamma$.  When $u > 0$, construct a random variable $\Gamma' $ taking values in $\CE(\N)$ by first generating a $PD(x,0)$ variable $\eta$ with $x = e^{-u}$, then picking $\eta$ (thought of as a probability on $\N$) iid random integers $y_C$ indexed by $C \in \Gamma$, and letting $\Gamma'$ be the set of equivalence classes,
\[ C_j' = \bigcup_{y_C = j} C.\]
Then $R_u(\Gamma,d\Gamma')$ is the law of $\Gamma'$.  The process $(\Gamma_u)$ is called the Bolthausen-Sznitman coalescent and can be used to give a concrete specification of Ruelle's direct limit construction of the continuous RPCs.  If $q \colon [0,1] \to [0,1]$ is a non-decreasing right-continuous function such that $q(\zeta) = 1$ for $\zeta \in (0,1)$ we associate with $(\Gamma_u)$ a ROSt $(\xi,Q)$ where $\xi \sim PD(\zeta,0)$ is independent of $(\Gamma_u)$ and $Q = (q_{ij})$ where $q_{ij} = q(\tau_{ij})$, 
\[  \tau_{ij} =  \inf\{ u > 0 : i,j \text{ are in the same } \Gamma_u \text{ equivalence class}\}.\]
We refer to $(\xi,Q)$ as a continuous RPC with parameters $q,\zeta$.  In the special case that $q$ is a step function with $\zeta = \inf\{ x : q(x) = 1\}$ then $(\xi,Q)$ has the law of the finite level RPC parameterized by $q$.

An immediate consequence of this is the following alternative construction of the finite level RPCs originally due to Bolthausen and Sznitman in \cite{BS98}, through reformulated as follows as Theorem 2.5 of \cite{AA07}.
\begin{proposition}
\label{prop::rpc_construction}
The distribution of the RPC $(\xi,Q)$ with parameters $0 = q_0 \leq q_1 < \cdots  < q_k <  q_{k+1} = 1$, $0 = x_0 <  x_1 < \cdots < x_k < x_{k+1} = 1$ satisfies the following:
\begin{enumerate}
 \item $\xi$ and $Q$ are independent,
 \item $\xi \sim PD(x_k,0)$, and
 \item Suppose that $(\xi',Q') = (q_{ij}')$ is independent of $\xi$ and distributed as a $k-1$ level RPC with parameters $q_1/q_k < \cdots < q_{k-1}/q_k$ and $x_1/x_k < \cdots < x_{k-1}/x_k$.  Then, conditional on $(\xi',Q')$, $Q \stackrel{d}{=} (q_{ij})$ where $q_{ij} = q_k q_{i^* j^*}'$ with $i^*$ denoting the $i$th element of a sequence $(i^*)$ of $\xi'$ (thought of as a probability on $\N$) iid random integers.
\end{enumerate}
\end{proposition}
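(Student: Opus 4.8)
The plan is to peel off the bottom level of the cascade and recognise the top $k-1$ levels as a rescaled $(k-1)$-level RPC. Index the coarse particles by $\beta\in\N^{k-1}$ and set $C_\beta := \eta^{0}_{\beta_1}\eta^{\beta(1)}_{\beta_2}\cdots\eta^{\beta(k-2)}_{\beta_{k-1}}$, the partial product over the top $k-1$ levels, so that $\eta_\alpha = C_{\alpha(k-1)}\,\eta^{\alpha(k-1)}_{\alpha_k}$. The processes $\eta^{\alpha(k-1)}$, each with intensity $x_k s^{-x_k-1}\,ds$, are i.i.d.\ over $\alpha(k-1)\in\N^{k-1}$ and independent of the family $(C_\beta)_\beta$, which is an unnormalised $(k-1)$-level cascade with parameters $x_1<\cdots<x_{k-1}$ and $q_1<\cdots<q_{k-1}$. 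I would run the whole argument conditionally on $(C_\beta)_\beta$.

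Three elementary facts about a Poisson process $\Pi$ on $(0,\infty)$ with intensity $\theta s^{-\theta-1}\,ds$, $\theta\in(0,1)$, do the work: (a) scaling $\Pi$ by a constant $c>0$ gives a Poisson process with intensity $\theta c^{\theta}s^{-\theta-1}\,ds$, and independent superpositions add intensities; (b) the decreasingly ordered, sum-normalised atoms of $\Pi$ have law $PD(\theta,0)$, and this law is unchanged if the intensity is multiplied by a positive constant; (c) the image of $\Pi$ under $s\mapsto s^{t}$, $t>0$, is a Poisson process with intensity $(\theta/t)s^{-\theta/t-1}\,ds$. Conditionally on $(C_\beta)_\beta$, fact (a) shows the pooled process $\eta$ is Poisson with intensity $x_k W s^{-x_k-1}\,ds$, where $W:=\sum_\beta C_\beta^{x_k}$. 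By (c) applied coordinatewise, $(C_\beta^{x_k})_\beta$ is precisely the $(k-1)$-level cascade with parameters $x_l/x_k$; writing $(\xi',Q')$ for its ranked normalisation, with $Q'$ carrying the $q$-parameters $q_l/q_k$, this is a $(k-1)$-level RPC whose total mass $W$ is a.s.\ finite (the summability of a cascade, where the strict inequality $x_{k-1}<x_k$ enters), and $\xi'$ is the ranking of $(C_\beta^{x_k}/W)_\beta$. Finally, since the summand intensities $x_k C_\beta^{x_k}s^{-x_k-1}\,ds$ are all proportional to the common profile $s^{-x_k-1}\,ds$, the coarse index carried by a point of $\eta$ is an i.i.d.\ mark, independent of the point positions, taking a given value $\beta$ with probability $C_\beta^{x_k}/W$.

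Granting this, parts (1) and (2) are immediate: conditionally on $(C_\beta)_\beta$, fact (b) gives $\xi:=(\eta_\alpha/N)_\downarrow\sim PD(x_k,0)$ irrespective of the conditioning, hence unconditionally $\xi\sim PD(x_k,0)$ and $\xi$ is independent of $(C_\beta)_\beta$; and $\xi$, being a function of the point positions alone, is conditionally — hence jointly — independent of the marks as well, so it is independent of $(\xi',Q')$ and of the sequence $(i^*)$ of marks listed in decreasing order of the $\eta_\alpha$, which by the previous paragraph is a $\xi'$-i.i.d.\ sequence of integers. For (3) I would unwind the definition of $Q$ on the tree: two particles $i\ne j$ in the same coarse block have $q_{ij}=q_k=q_k q'_{i^*i^*}$, while if $i^*\ne j^*$ then $q_{ij}$ equals the overlap of those two blocks inside the $(k-1)$-cascade with $q$-parameters $q_l$, which is $q_k$ times the corresponding overlap $q'_{i^*j^*}$ of the rescaled cascade. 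Thus, conditionally on $(\xi',Q')$, $Q\stackrel{d}{=}(q_k q'_{i^*j^*})$ with $(i^*)$ a $\xi'$-i.i.d.\ sequence independent of $\xi$, which is (3).

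The points that need care are the finiteness bookkeeping that makes the superpositions and normalisations legitimate — above all $W<\infty$ a.s., which is precisely what forces $x_{k-1}<x_k$ — and the two places where the exact intensity profile $s^{-\theta-1}\,ds$ is essential: the scale invariance in (b), and the fact that the coarse labelling is an \emph{i.i.d.}\ marking, both of which fail for a generic intensity. I expect the only genuinely non-routine observation to be identity (c): raising the top $k-1$ levels to the power $x_k$ returns the $(k-1)$-level cascade with every parameter divided by $x_k$, which is exactly what makes the parameters $q_l/q_k$ and $x_l/x_k$ appear in part (3). Alternatively, (1)--(3) can be read off the Bolthausen-Sznitman coalescent presentation of the continuous RPC recalled above, since one transition of the semigroup $R_u$ is precisely a $PD$-indexed coagulation of the form appearing in (3).
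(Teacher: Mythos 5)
The paper does not supply its own proof of this proposition; it states it as an immediate consequence of the Bolthausen--Sznitman coalescent picture recalled just before it, and attributes it to \cite{BS98} and Theorem~2.5 of \cite{AA07}. Your argument therefore cannot be compared line-by-line against the paper's, but it is a correct, self-contained proof by direct Poisson-process calculus, and it is essentially what one would extract from those references. The decomposition $\eta_\alpha = C_{\alpha(k-1)}\eta^{\alpha(k-1)}_{\alpha_k}$, the conditional superposition giving a Poisson process of intensity $x_k W s^{-x_k-1}\,ds$ with $W=\sum_\beta C_\beta^{x_k}$, the scale-invariance of $PD(x_k,0)$, the i.i.d.\ marking argument (the one place where the exact $s^{-x_k-1}$ profile is essential), and the power map $s\mapsto s^{x_k}$ converting the top $k-1$ levels into a cascade with $x$-parameters $x_l/x_k$ --- these all check out. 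Two small points worth tightening in a final write-up: the $q_l/q_k$ rescaling in part~(3) is not a consequence of fact~(c) but pure bookkeeping to put ones on the diagonal of $Q'$ (you do unwind it correctly in the paragraph on (3)), and the a.s.\ finiteness of $W$ and of the pooled cascade deserves an explicit citation since it is used silently at several places and is what makes the superposition and the ranking well-defined. Your closing observation that the result can also be read off one step of the coalescent semigroup $R_u$ is precisely the route the paper implicitly takes; the direct Poisson route has the virtue of being self-contained and making it transparent why $PD(x_k,0)$ and the $\xi'$-i.i.d.\ marks appear, whereas the coalescent route packages that work into the cited theorems of \cite{BS98}.
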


Proposition \ref{prop::rpc_construction} is used in the proof of Theorem 2.9 of \cite{AA07} to show that the finite level RPCs are RQS.  We will sketch the argument in the case that $\psi(x) = \lambda x$ is linear, the general case being roughly the same but with more complicated notation.  First, note that if $(\xi,Q)$ is a $k$-level RPC with order parameter $q(x)$ then $(\xi,Q^{*r})$ is also a $k$-level RPC but with order parameter $q^r(x)$.  Hence that $(\xi,Q)$ is RQS will follow if we can show that $(\xi,Q)$ is quasi-stationary for the correlated evolution.  When $k=1$ we can decompose the Gaussian increments as $\kappa_i = \kappa^c + \kappa_i^f$ where $\kappa^c$ is the common shift and $(\kappa_i^f)$ is an iid sequence of Gaussian random variables with variance $(1-q_1^2)$.  The common part $\kappa^c$ gets canceled in the normalization \eqref{eqn::qs} and hence in this case the correlated evolution is the same as the free evolution.  As $\xi \sim PD(x_1,0)$, the law of $\xi$ is invariant; $Q$ is trivially invariant as its non-diagonal entries are constant.  Now suppose $(\xi,Q)$ is a $k \geq 2$ level cascade and let $(\xi',Q')$ be a $k-1$ level cascade as in Proposition \ref{prop::rpc_construction}.  Conditional on $(\xi',Q')$, $(\xi,Q)$ can be viewed as a marked Poisson-Dirichlet variable with the mark of $\xi_i$ being its ``parent'' in $(\xi',Q')$.  The Gaussian sequence of increments $(\kappa_i)$ can again be decomposed as $\kappa_i = \kappa_i^c + \kappa_i^f$ where $\kappa_i^f$ are iid with variance $(1-q_{k}^2)$.  Conditional on the realization of $\kappa_i^c$, the evolution is that of a marked $PD(x_k,0)$ variable with mark dependent increments.  One can show using the probability mass generating functional (Lemma 2.7 of \cite{AA07}, see also Proposition A.2 of \cite{BS98}) that the resulting process is again a marked $PD(x_k,0)$ variable but with a new marking measure.  It turns out that in this case the evolution of the marking measure exactly corresponds to the correlated evolution of the $k-1$ level cascade $(\xi',Q')$, which we know by induction to be quasi-stationary.  As the set of RQS laws is closed (Proposition \ref{prop::rqs_closed}), it follows from the continuity argument given in the next section that the continuous RPCs are also RQS.

We will now show that the hypothesis of robustness is necessary to single out the cascades using a construction due to Arguin \cite{A08}.  Suppose that $(\xi,Q)$ is a finite level cascade with parameter $q(x)$ and $\xi \sim PD(\zeta,0)$, $\zeta \in (0,1)$.  The proof of Theorem 2.9 of \cite{AA07} implies that if $q_1 = 0$ then for each $r \in \N$ and $\lambda > 0$ there exists a deterministic quantity $v(r,\lambda)$ such that
\[ \left( \left( \xi_n e^{\lambda \kappa_n} \right)_\downarrow, Q^\pi\right) \stackrel{d}{=} \big( (e^{v(r,\lambda)} \xi_n), Q \big),\]
where the $(\kappa_n)$ are Gaussian with covariance $Q^{*r}$; $v(r,\lambda)$ is referred to as the $(r,\lambda)$-crowd velocity of $(\xi,Q)$.  In other words, the renormalization constant in \eqref{eqn::qs} is a deterministic function of $r$, $\lambda$, and the parameters of the RPC.  In fact, one can compute $v(r,\lambda)$ explicitly (Theorem 5.4, \cite{AS207}),
\[ v(r,\lambda) = \frac{\lambda^2}{2} \int_0^1 (1-q^r) dx(q).\]
This implies that if $(\xi^1,Q^1)$ and $(\xi^2,Q^2)$ are independent RPCs with equal $(r,1)$-crowd velocity, the ROSt $(\xi,Q)$ constructed by taking the union of the particles in $\xi^1,\xi^2$ with overlaps between particles of $\xi^i$ given by $Q^i$ and between $\xi^1,\xi^2$ given by $0$ is $(r,\lambda)$-quasi-stationary for all $\lambda > 0$.  Such ROSts are easily seen not to be RPCs in general.  Furthermore, fixing $R \in \N$ and then choosing distinct RPCs $(\xi^1,Q^1),(\xi^2,Q^2)$ so that they have the same $(r,1)$-velocity for all $r \leq R$ one can construct examples of ROSts that are $(r,\lambda)$-quasi-stationary for all $r \leq R$ and $\lambda > 0$ that are not RPCs.  This technique can be extended to build much more elaborate examples of quasi-stationary ROSts using a variation of Proposition \ref{prop::rpc_construction} and RPCs as the basic building blocks.  Nevertheless, such ROSts still have ultrametric overlaps and it remains an important question to determine if quasi-stationarity alone implies ultrametricity.

\section{Preliminaries}
\label{sec::closure}

The goal of this section is to prove that the set of RQS ROSts is closed and that the closure of the set of finite level RPCs is given by the set of continuous RPCs.  In order to do this, we need to specify a topology (same as is used in \cite{AA07}). Let $P_m$ be the set of mass partitions $\{\xi_1 \geq \xi_2 \geq \cdots \geq 0 : \sum_{i=1}^\infty \xi_i \leq 1\}$ and $\CQ$ the set of symmetric positive semidefinite $\N \times \N$ matrices $Q = (q_{ij})$ with $q_{ii} = 1$ for all $i$.  We equip $P_m$ with the metric induced by the $\ell^\infty$ norm and $\CQ$ with the product topology.  The latter can be metrized with the distance,
\[ d(Q,P) = \sum_{i,j=1}^\infty 2^{-i-j} |q_{ij} - p_{ij}|.\]
Note that both of these topologies are compact.  Denote by $\Omega_{os} = P_m \times \CQ$ the set of overlap structures and $\CM_1(\Omega_{os})$ the set of ROSts, i.e. the Borel regular probability measures on $\Omega_{os}$.  We equip $\CM_1(\Omega_{os})$ with the weak topology.  Observe that $\Phi_r = \Phi_{r,1}$ is \emph{not} continuous with respect to this topology.  For example, if one considers the sequence of ROSts $(\xi^n,Q^n)$ where $\xi^n$ consists of a single particle at $\frac{1}{n}$ and $Q^n$ is the identity matrix then $(\xi^n,Q^n) \to (\xi,Q)$ which is the ROSt where all of the particles have zero mass while for all $n$, $\Phi_r(\xi^n,Q^n)$ consists of a single particle at $1$.  Nevertheless, the set of RQS ROSts is closed in the $\Omega_{os}$ topology.  We remark that if one changes the topology of $P_m$ to that induced by the $\ell^1$ norm then $\Phi_r$ is in fact continuous but this topology is not compact so that we do not have the desired ergodic decomposition.

\begin{proposition}
\label{prop::rqs_closed}
The set of RQS laws is closed in $\CM_1(\Omega_{os})$.
\end{proposition}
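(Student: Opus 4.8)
The plan is to take a sequence of RQS laws $\mu_n\to\mu$ in $\CM_1(\Omega_{os})$ and to show that $\mu$ is stable under $\Phi_{r,\lambda}$ for every $r\in\N$ and $\lambda>0$. The only genuine obstruction is the discontinuity noted just above: under $\Phi_{r,\lambda}$ mass can escape into infinitely many vanishing particles, changing the total weight in the limit. Robustness will be used only insofar as it pins the measures to a set on which this escape is harmless.

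First I would make a structural reduction. Since $\psi\in\CC$, for any $\xi\neq 0$ one has $0<\sum_j \xi_j e^{\lambda\psi(\kappa_j)}<\infty$ almost surely --- finiteness because $\E e^{\lambda\psi(Z)}<\infty$ and $q_{jj}=1$, positivity because some $\xi_j>0$ --- so $\Phi_{r,\lambda}$ maps $\{\xi\neq 0\}$ into $\CD_1:=\{\xi\in P_m:\sum_i \xi_i=1\}$, and it fixes the zero partition together with its matrix pointwise (the order-restoring permutation being the identity there). Hence every fixed point of $\Phi_{r,\lambda}$, in particular each $\mu_n$, is carried by $\CD:=\{\xi=0\}\cup\CD_1$. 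I would then show that $\mu$ is carried by $\CD$ too, using only this and the portmanteau theorem. Write $m(\xi)=\sum_i \xi_i$; this is lower semicontinuous on $P_m$, being the increasing supremum of the continuous partial sums. For $0<\epsilon<1$ the set $\{m>1-\epsilon\}$ is open, and since $\mu_n$ is carried by $\CD$ we have $\mu_n(\{m>1-\epsilon\})=\mu_n(\CD_1)$; hence $\mu(\{m=1\})=\lim_{\epsilon\downarrow 0}\mu(\{m>1-\epsilon\})\ge\liminf_n\mu_n(\CD_1)$. On the other hand $\{\xi=0\}$ is closed, so $\mu(\{\xi=0\})\ge\limsup_n\mu_n(\{\xi=0\})=1-\liminf_n\mu_n(\CD_1)$. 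Adding the two inequalities, $\mu(\CD)=1$.

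The core of the proof is to pass the identity $\Phi_{r,\lambda *}\mu_n=\mu_n$ to the limit via a Skorokhod coupling. I would first record two elementary facts. (i) On $\CD_1$ the $\ell^\infty$- and $\ell^1$-topologies coincide: if $\xi^{(n)}\to\xi$ in $\ell^\infty$ with $m(\xi)=1$, then $m(\xi^{(n)})\to 1$ and $\|\xi^{(n)}-\xi\|_1=2\sum_i(\xi^{(n)}_i-\xi_i)^{+}+o(1)\to 0$, the positive part being controlled coordinatewise up to level $N$ and by $\sum_{i>N}\xi_i$ beyond it; consequently $\Phi_{r,\lambda}$ is $\ell^1$-continuous on $\CD_1$. (ii) The driving Gaussian sequences can be realized on one probability space so that $\kappa_n\to\kappa$ consistently with $Q_n\to Q$ (routine, since $Q_n^{*r}\to Q^{*r}$ entrywise). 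Now realize $(\xi_n,Q_n)\to(\xi,Q)$ a.s. together with such Gaussians, writing $\Phi_{r,\lambda}(\xi,Q;\kappa)$ for the map with its driving Gaussian exhibited, so that $\Phi_{r,\lambda}(\xi_n,Q_n;\kappa_n)$ has law $\mu_n$; the aim is $\Phi_{r,\lambda}(\xi_n,Q_n;\kappa_n)\to\Phi_{r,\lambda}(\xi,Q;\kappa)$ in probability. On the event $\{\xi\neq 0\}$ we have $m(\xi)=1$, hence $\xi_n\to\xi$ in $\ell^1$ and $\sum_j\xi_j e^{\lambda\psi(\kappa_j)}>0$; moreover the tails $\sum_{j>N}\xi_{n,j}e^{\lambda\psi(\kappa_{n,j})}$ are small in $L^1$ uniformly in $n$, since their conditional means equal $(\E e^{\lambda\psi(Z)})\sum_{j>N}\xi_{n,j}\to (\E e^{\lambda\psi(Z)})\sum_{j>N}\xi_j$, so the normalized reordered weights and the conjugated matrix converge. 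On the event $\{\xi=0\}$, by the previous paragraph each $\xi_n$ is either the zero partition or a partition of total mass $1$ with $\|\xi_n\|_\infty\to 0$ (``dust''), and in the dust case a moment estimate using $\E e^{2\lambda\psi(Z)}<\infty$ --- bounding $\p(\exists\, i:\xi_{n,i}e^{\lambda\psi(\kappa_{n,i})}\ge t)\le C t^{-2}\|\xi_n\|_\infty$ and controlling the normalization --- shows the evolved partition is again dust, so the weight part of $\Phi_{r,\lambda}(\xi_n,Q_n;\kappa_n)$ tends to $0$ in probability. Convergence in probability gives $\mu_n\to\Phi_{r,\lambda *}\mu$ in law, and with $\mu_n\to\mu$ this forces $\Phi_{r,\lambda *}\mu=\mu$. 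Since $r$ and $\lambda$ were arbitrary, $\mu$ is RQS.

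I expect the main obstacle to be the ``dust case'' in the last step: one must show that a partition of total mass $1$ with vanishing largest atom still has a vanishing largest atom after the $\psi$-weighting and normalization, uniformly over all admissible overlap matrices $Q_n$. In the strongly correlated regimes the normalizing constant need not be bounded below, so one has to exploit that in precisely those regimes the numerator is small for the same reason; the integrability built into $\CC$ is exactly what makes this estimate close.
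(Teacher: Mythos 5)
The route you take is genuinely different from the paper's. The paper leans on Theorem~1.9 of \cite{AA07}, which gives that for each $n$ there is a random parameter $X_n$ with $\xi^n\sim PD(X_n,0)$ conditionally, and that $\xi^n\perp Q^n$; passing to a subsequence gives $X_n\to X$, $\xi\sim PD(X,0)$, $\xi\perp Q$, and then Proposition~\ref{prop::rescale_direct} lets one replace $\xi^n$ by the \emph{fixed} limit $\xi$, so that one only ever needs continuity of $\Phi_r$ in the $Q$-argument with the mass partition held constant. You instead try to prove closedness directly by a Skorokhod coupling, after reducing to the set $\CD=\{\xi=0\}\cup\CD_1$. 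Your reduction to $\CD$ and the portmanteau argument that $\mu(\CD)=1$ are both correct, and on $\CD_1$ the $\ell^\infty$-to-$\ell^1$ upgrade and the resulting continuity of $\Phi_{r,\lambda}$ along the coupling is fine. If it went through in full, this would be a more self-contained argument since it would bypass the classification $\xi^n\sim PD(X_n,0)$.

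The gap is the dust case, and it is more serious than your final paragraph suggests. You focus on showing the \emph{weight} part of $\Phi_{r,\lambda}(\xi_n,Q_n;\kappa_n)$ goes to zero, and the sketched second-moment bound indeed only controls the numerator; the denominator $\sum_j\xi_{n,j}e^{\lambda\psi(\kappa_{n,j})}$ has mean $\E e^{\lambda\psi(Z)}$ but, for arbitrary admissible $Q_n$, need not be bounded away from $0$ uniformly, so ``controlling the normalization'' is unresolved as written. But even granting dust-stays-dust for the weights, the \emph{matrix} coordinate breaks the coupling argument. On $\{\xi=0\}$ the target is $\Phi_{r,\lambda}(\xi,Q;\kappa)=(0,Q)$, while $\Phi_{r,\lambda}(\xi_n,Q_n;\kappa_n)$ has matrix part $(Q_n)^{\pi_n}$, where $\pi_n$ is the order-restoring permutation of a partition whose largest atom is vanishing. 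That permutation sends $1,2,\ldots$ to arbitrarily large and wildly varying indices, so entrywise a.s.\ convergence $(Q_n)^{\pi_n}\to Q$ fails in the coupling --- e.g.\ take $\mu_n$ a two-level RPC with Poisson--Dirichlet parameter $\to 1$; then $q^{(n)}_{\pi_n(1)\pi_n(2)}$ oscillates between the two overlap levels and does not converge pathwise, even though $Q_n\to Q$ entrywise. Thus ``convergence in probability'' of $\Phi_{r,\lambda}(\xi_n,Q_n;\kappa_n)$ to $\Phi_{r,\lambda}(\xi,Q;\kappa)$ is false on the dust event, and the final step (in-probability $\Rightarrow$ in-law $\Rightarrow$ $\Phi_{r,\lambda*}\mu=\mu$) does not close. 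This is precisely what the paper's use of $\xi\perp Q^n$ avoids: there the mass partition in the coupling is the limit $\xi$ from the start, which is either $0$ (and then $\Phi_r$ is the literal identity) or a fixed element of $\CD_1$, so no wild permutation ever arises.
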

\begin{proof}
Observe that $(\xi,Q) \mapsto \Phi_r(\xi,Q)$ is weakly continuous in $Q$.  Suppose that $(\xi^n,Q^n)$ is a sequence of RQS ROSts converging in the topology of $\CM_1(\Omega_{os})$ to $(\xi,Q)$.  By Theorem 1.9 of \cite{AA07} we know that for each $n$ there exists a parameter $X_n \in (0,1)$ measurable with respect to $\sigma(\xi^n)$ such that conditional on $X_n$, $\xi^n \sim PD(X_n,0)$.  By passing to a subsequence we may assume without loss of generality that $X_n$ converges weakly to $X \in [0,1]$.  A simple computation with the mass generating functional shows that conditional on $X$, $\xi \sim PD(X,0)$.  Furthermore, as $\xi^n$ is independent of $Q^n$ for all $n$ it follows that $\xi$ is independent of $Q$.  Proposition \ref{prop::rescale_direct} implies that if $(\eta,P)$ is RQS and $\xi$ is a mixture of Poisson-Dirichlet distributions independent of $P$ then $(\xi,P)$ is RQS.  We can embed $(\xi^n,Q^n)$ and $(\xi,Q)$ into a common probability space such that $\xi$ is independent of $Q^n$ for all $n$ so that in particular $(\xi,Q^n)$ is RQS.  Combining everything,
\begin{align*}
   (\xi,Q) \stackrel{d}{=} \lim_n (\xi,Q^n) \stackrel{d}{=} \lim_n \Phi_r(\xi,Q^n) \stackrel{d}{=} \Phi_r(\xi, \lim_n Q^n),
\end{align*}
from which the proposition follows.
\end{proof}

Fix $\zeta \in (0,1)$ and let $\overline{X} = \overline{X}(\zeta)$ be the set of functions $q$ on $[0,1]$ that are right-continuous, non-decreasing, and $q(x) = 1$ for all $x \geq \zeta$.  Let $X$ be the set of $q \in \overline{X}$ with finite range.  Equip $\overline{X}$ with the topology induced by the $L^1([0,1])$ norm; observe that the closure of $X$ is $\overline{X}$.  The association of the parameter $q \in X$,
\[ q(x) = \sum_{l=1}^{k+1} q_l 1_{[x_{l-1},x_l)},\ x_k = \zeta,\]
with a finite level RPC induces a map $T \colon X \to \CM_1(\Omega_{os})$ that is trivially invertible.  Let $Y = Y(\zeta) = T(X)$ be the set of finite level RPCs $(\xi,Q)$ in $\CM_1(\Omega_{os})$ with $\xi \sim PD(\zeta,0)$.  Denote by $\overline{Y}$ its closure.
\begin{proposition}
\label{prop::rpc_homeo}
The map $T$ is uniformly continuous with uniformly continuous inverse.  In particular, $T$ extends uniquely to a homeomorphism $\overline{T} \colon \overline{X} \to \overline{Y}$.
\end{proposition}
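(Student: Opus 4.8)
The plan is to establish two-sided uniform continuity of $T$ directly on the dense subset $X \subseteq \overline{X}$, and then invoke the standard fact that a uniformly continuous bijection between metric spaces with uniformly continuous inverse extends uniquely to a homeomorphism of completions (here $\overline{X}$ and $\overline{Y}$, each being the closure of $X$, resp.\ $Y=T(X)$, in an ambient metric space). So the content is entirely in the two quantitative estimates: (i) if $q, \tilde q \in X$ are close in $L^1([0,1])$, then the RPC laws $T(q), T(\tilde q)$ are close in the weak topology on $\CM_1(\Omega_{os})$; and (ii) conversely, closeness of the laws forces closeness of the parameters in $L^1$.

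For (i), the natural approach is a coupling argument via the Bolthausen--Sznitman coalescent representation described in the previous section. Recall that a finite level RPC with parameter $q$ can be realized as $(\xi, Q)$ with $\xi \sim PD(\zeta,0)$ independent of the coalescent $(\Gamma_u : u \geq 0)$ and $q_{ij} = q(\tau_{ij})$, where $\tau_{ij}$ is the coalescence time of $i$ and $j$. Crucially, this realization uses \emph{the same} $\xi$ and \emph{the same} coalescent $(\Gamma_u)$ for every parameter $q \in X(\zeta)$ — only the deterministic post-composition by $q$ changes. Hence $T(q)$ and $T(\tilde q)$ admit a coupling on a common probability space under which $\xi$ is shared and the overlap matrices are $q(\tau_{ij})$ and $\tilde q(\tau_{ij})$. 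Since $\|q - \tilde q\|_{L^1([0,1])}$ is small, $q$ and $\tilde q$ differ on a set of Lebesgue measure at most $\|q-\tilde q\|_{L^1}/\delta$ where $\delta$ is a lower bound on the jump sizes — but to avoid dependence on $\delta$ one instead estimates: for each pair $(i,j)$, $\E|q(\tau_{ij}) - \tilde q(\tau_{ij})|$ is controlled because the law of $\tau_{ij}$ has a bounded density with respect to Lebesgue measure on $(0,\infty)$ after the change of variables $x = e^{-u}$ (the coalescence time of two fixed indices corresponds to a uniform-type variable under the Bolthausen--Sznitman description), so $\E|q(\tau_{ij}) - \tilde q(\tau_{ij})| \leq C \|q - \tilde q\|_{L^1([0,1])}$ with $C$ independent of $i,j$. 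Feeding this into the metric $d(Q,P) = \sum_{i,j} 2^{-i-j}|q_{ij}-p_{ij}|$ and recalling $d(Q,P) \le 2$ always, one gets that the coupled overlap matrices are close in expectation, hence $T(q), T(\tilde q)$ are close in the weak topology (e.g.\ test against any bounded Lipschitz function on $\Omega_{os}$, using that $\xi$ is shared and $Q \mapsto$ integrand is Lipschitz for the $d$-metric on the $Q$-coordinate and continuous in $\xi$).

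For (ii), uniform continuity of $T^{-1}$, one recovers the parameter $q$ from the law $T(q)$ in a way that depends continuously (indeed Lipschitz-ly) on the law. The cleanest recovery is: under $T(q)$, for a Lebesgue-uniform pick (via the mass $\xi$ itself as a probability on $\N$) the distribution of the single overlap $q_{12}$ is exactly the pushforward of a known fixed law on $[0,1]$ (the coalescence-time law, transported through $x = e^{-u}$) by the monotone function $q$; since $q$ is monotone non-decreasing and right-continuous, it is determined by this one-dimensional pushforward, and the map from such a pushforward measure back to $q \in L^1$ is uniformly continuous (monotone rearrangement / quantile inversion is $1$-Lipschitz from Wasserstein-$1$ on laws to $L^1$ on quantile functions, and the relevant weak convergence of the overlap law controls Wasserstein-$1$ since everything lives in the compact interval $[0,1]$). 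Concretely: weak convergence $T(q_n) \to T(q)$ forces weak convergence of the law of $q_{12}$ under the $\xi$-sampling, hence $L^1$-convergence $q_n \to q$ after the inversion.

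The main obstacle I expect is bookkeeping the coupling in step (i) rigorously: one must check that the \emph{same} ambient randomness ($\xi$ and $(\Gamma_u)$) genuinely realizes every $T(q)$ simultaneously — this is exactly the content of the direct-limit / Bolthausen--Sznitman construction recalled in Section~2, so it is available — and then that the bounded-density claim for $\tau_{12}$ holds uniformly and survives the reordering $\downarrow$ that defines $\xi$ as a mass partition (the reordering is $\xi$-measurable and hence harmless for the coupling of the $Q$-coordinate, but one should note that the \emph{labeled} overlaps $q_{ij}$ attach to the reordered particles; since $\xi$ is shared between the two laws in the coupling, the same reordering applies to both and no mismatch arises). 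Once this is set up, both estimates are elementary, and the extension to $\overline{T}\colon \overline{X}\to\overline{Y}$ is the routine completion argument, with surjectivity onto $\overline{Y}$ immediate because $\overline{T}(\overline{X}) \supseteq T(X) = Y$ is closed (being the continuous image of the compact — or complete — space $\overline{X}$) and contains the dense set $Y$.
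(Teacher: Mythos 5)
Your approach is correct and, in the key estimates, genuinely different from the paper's. Both you and the paper exploit the same coupling: realize $T(f)$ and $T(g)$ on a common space with the same $\xi$ and the same Bolthausen--Sznitman coalescent $(\Gamma_u)$, so that the coupled overlap matrices are $Q=(f(\cdot))(\tau_{ij})$ and $P=(g(\cdot))(\tau_{ij})$ and the comparison reduces to a statement about $d(Q,P)$. Where you diverge is in the estimate itself. The paper's proof rests on Lemma~\ref{lem::overlap_prob}, a two-sided bound on the probability that any of the $n$ largest particles coalesce at a prescribed level, and runs the argument by truncating the metric $d(Q,P)=\sum 2^{-i-j}|q_{ij}-p_{ij}|$ at $n\approx -\log_2\epsilon$ and applying a union bound (for the upper estimate) or disjointness of the level-events (for the lower estimate). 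You instead observe that, in the coalescent coupling, $Q$ is an \emph{exchangeable} array with $\xi$ independent of it, and that for any fixed $i\neq j$ the coalescence time $\tau_{ij}$ is $\mathrm{Exp}(1)$ — equivalently, after the change of variable $x=\zeta e^{-u}$ (you wrote $x=e^{-u}$, which misses the $\zeta$, but this is cosmetic), $q_{ij}=q(\zeta e^{-\tau_{ij}})$ with $\zeta e^{-\tau_{ij}}\sim\mathrm{Unif}(0,\zeta)$. This immediately gives $\E|q_{ij}-p_{ij}|=\tfrac{1}{\zeta}\|f-g\|_{L^1}$ uniformly in $(i,j)$ and hence $\E\,d(Q,P)\le \tfrac{1}{\zeta}\|f-g\|_{L^1}$, a Lipschitz bound in expectation which is cleaner than the paper's probability tail bound. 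For the inverse you use a second idea absent from the paper: the one-dimensional marginal law of $q_{12}$ under $T(q)$ is exactly $q_\#\mathrm{Unif}(0,\zeta)$, and since $W_1(\mu,\nu)$ on $[0,1]$ equals the $L^1$ distance between quantile functions, recovering $q$ from this marginal is a $\zeta$-Lipschitz operation from Wasserstein-$1$ (hence from any metric for weak convergence, by compactness) into $L^1$. This replaces the paper's use of the lower bound in Lemma~\ref{lem::overlap_prob} and the disjointness of the $A_2^{i_j}$. In short, your route bypasses Lemma~\ref{lem::overlap_prob} entirely, trading the level-by-level combinatorics for exchangeability plus the explicit exponential coalescence-time law and the quantile-inversion isometry; the bounds you get are slightly stronger (Lipschitz rather than modulus-of-continuity in one direction). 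One minor imprecision to clean up in a final write-up: you invoke ``the $\xi$-sampling'' when describing the overlap law in the inversion step, but the fixed pair $(1,2)$ already has the right law by exchangeability of $(\Gamma_u)$, and that version is preferable since the map $(\xi,Q)\mapsto q_{12}$ is manifestly continuous on $\Omega_{os}$ whereas the $\xi$-averaged empirical overlap measure is not obviously so in the $\ell^\infty\times$product topology.
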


It is not hard to see that for $q \in \overline{X}$, $\overline{T}(q)$ is the same as the ROSt induced by the Bolthausen-Sznitman coalescent with overlap parameter $q$.  The most important consequence of Proposition \ref{prop::rpc_homeo} for us is that $\overline{Y}$ consists precisely of the continuous RPCs with Poisson-Dirichlet parameter $\zeta$.  Suppose that $(\xi,Q)$ is a finite level RPC.  We say that particles $\xi_i,\xi_j$ overlap at level $r$ provided that $l(i,j) = r$, $l(i,j)$ as defined in the previous section.  The proof of Proposition \ref{prop::rpc_homeo} depends on the following estimate that any of the largest $n$ particles overlap at a particular level.

\begin{lemma}
\label{lem::overlap_prob}
Let $q \in X$, $(\xi,Q)$ be an RPC with parameter $q$, and $0 = x_0 < x_1 < \cdots < x_{k} = \zeta < x_{k+1} = 1$ be the endpoints of the intervals on which $q$ is constant.  Let
\[ A_n^r = \bigcup_{1 \leq i < j \leq n} \{\xi_i,\xi_j \text{ overlap at level } r\}.\]
Then,
\[ |x_{r+1} - x_r| \leq \p(A_n^r) \leq C(n,\zeta)|x_{r+1} - x_r|,\]
where $C(n,\zeta) = \tfrac{1}{2\zeta}n(n-1)$.
\end{lemma}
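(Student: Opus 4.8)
The plan is to estimate $\p(A_n^r)$ by controlling, for a single pair $1 \le i < j \le n$, the probability that $\xi_i$ and $\xi_j$ overlap at level exactly $r$, and then to combine these via crude union/inclusion bounds. The key observation is that overlapping at level $r$ means the truncations $\alpha(r)$ and $\beta(r)$ of the multi-indices labelling $\xi_i,\xi_j$ agree while $\alpha(r+1),\beta(r+1)$ do not; equivalently, in the Bolthausen--Sznitman language, $i$ and $j$ are in the same $\sim_r$-class but not the same $\sim_{r+1}$-class. The natural quantity to work with is the probability $p_r$ that a $PD(\zeta,0)$-sampled pair sits in the same class at the coarser level but splits at the finer one, and the whole content of the lemma is that this elementary pair-probability is comparable to $|x_{r+1}-x_r|$.

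First I would set up the pair estimate. Fix $i<j$ and condition on the sizes of the Poisson families at levels $0,\dots,r-1$; what remains is to sample, within the common $\alpha(r-1)$-block, the two next coordinates $\alpha_r,\beta_r$ from $PD(x_r,0)$-weights and then (if they coincide) the coordinates $\alpha_{r+1},\beta_{r+1}$ from $PD(x_{r+1},0)$-weights. Using that a $PD(x,0)$ variable $(\xi_i)$ satisfies $\E\sum_i \xi_i^2 = 1-x$ and more generally $\E\sum_i \xi_i^a = \Gamma(a-x)/(\Gamma(a)\Gamma(1-x))$, the probability that two independent draws from the $PD(x_{r+1},0)$ weights land in the same atom is $1-x_{r+1}$, and the probability they land in the same atom at level $r$ but different atoms at level $r+1$ works out — after summing the geometric/nested structure — to be proportional to $x_{r+1}-x_r$, with the proportionality constant bounded above by $1/(2\zeta)$ once one bounds $x_r\ge$ nothing helpful but uses $x_{r+1}\le\zeta^{-1}\cdot(\text{something})$... more precisely one gets $p_r = \tfrac{1}{x_r}(x_{r+1}-x_r)\cdot(\text{correction in }[x_r/x_{r+1},1])$ type expression, and since the finest level forces $x_{k}=\zeta$ the relevant ratios are controlled by $\zeta$. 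The lower bound $|x_{r+1}-x_r|\le \p(A_n^r)$ comes simply from $\p(A_n^r)\ge \p(\xi_1,\xi_2\text{ overlap at level }r)\ge x_{r+1}-x_r$, which is the clean direction: two independent $PD$-samples at level $r+1$ fail to coincide with probability exactly $x_{r+1}$, so the "split here" event already has probability at least $x_{r+1}-x_r$ after accounting for coinciding at level $r$.

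For the upper bound I would write $A_n^r \subseteq \bigcup_{1\le i<j\le n}\{\xi_i,\xi_j\text{ overlap at level }r\}$ and apply the union bound, giving $\p(A_n^r)\le \binom{n}{2}\, p_r$. Then it remains to show $p_r \le \tfrac{1}{\zeta}(x_{r+1}-x_r)$, i.e. that the single-pair overlap-at-level-$r$ probability is at most $\zeta^{-1}|x_{r+1}-x_r|$. Here one uses that being in the same level-$r$ block has probability at most $1$ trivially, that the conditional probability of splitting at the next level is at most $x_{r+1}$, and — to get the $(x_{r+1}-x_r)$ factor rather than just $x_{r+1}$ — one conditions on being in the same level-$(r+1)$ would-be block structure and exploits the telescoping: $p_r = \p(\text{same at }r) - \p(\text{same at }r+1)$, and for nested $PD$ cascades these probabilities are $\prod$ of $(1-x_l)$-type factors whose successive differences carry a factor $(x_{r+1}-x_r)$ divided by some $x_l \ge x_r \ge$ a quantity bounded below in terms of $\zeta$ — giving the stated $C(n,\zeta)=\tfrac{1}{2\zeta}n(n-1)=\zeta^{-1}\binom{n}{2}$.

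The main obstacle I expect is the bookkeeping in the pair-probability computation: correctly identifying "overlap at level exactly $r$" as a difference of two nested-coincidence probabilities and evaluating those using the moment formula for $PD$ weights, while extracting the clean factor $(x_{r+1}-x_r)$ and tracking that the leftover ratios of consecutive $x_l$'s are all bounded by $\zeta^{-1}$ because the cascade terminates at $x_k=\zeta$. Everything else (the union bound, the trivial lower bound) is routine.
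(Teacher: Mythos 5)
Your overall strategy matches the paper's exactly: reduce to a single-pair overlap probability $p_r := \p(A_2^r)$, note $p_r \le \p(A_n^r) \le \binom{n}{2} p_r$, and evaluate $p_r$ via Poisson--Dirichlet moment identities. The issue is that the pair-probability computation you sketch is not actually carried out, and the intermediate formulas you write down are wrong. The paper applies Proposition~\ref{prop::rpc_construction} iteratively to get independent processes $\eta^l \sim PD(x_l/x_{l+1},0)$ and writes the conditional probability exactly as
\[
Z = \p(A_2^r \mid \eta^1,\ldots,\eta^{k-1}) = \left(\prod_{l=r+1}^{k-1}\sum_m \eta^l_m(1-\eta^l_m)\right)\sum_m(\eta^r_m)^2,
\]
whose expectation telescopes to $\prod_{l=r+1}^{k-1}\tfrac{x_l}{x_{l+1}}\cdot\left(1-\tfrac{x_r}{x_{r+1}}\right) = \tfrac{1}{\zeta}(x_{r+1}-x_r)$. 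Your claimed expression $p_r = \tfrac{1}{x_r}(x_{r+1}-x_r)\cdot(\text{correction})$ does not match this, and the assertion that two independent draws fail to coincide at level $r+1$ with probability exactly $x_{r+1}$ is off by the factor $1/\zeta$ that accumulates from the product of ratios $x_l/x_{l+1}$ down to $x_k=\zeta$. Your telescoping picture, $p_r = \p(\text{same at level }\ge r) - \p(\text{same at level }\ge r+1)$ with $\p(\text{same at level }\ge r) = 1 - x_r/\zeta$, does recover the exact answer $p_r = \tfrac{1}{\zeta}(x_{r+1}-x_r)$, and from there both bounds follow immediately (the lower bound because $\zeta<1$, the upper bound via the union bound giving $C(n,\zeta) = \binom{n}{2}\cdot\tfrac{1}{\zeta}$, not a per-pair constant $\tfrac{1}{2\zeta}$ as your write-up suggests); but the nested $PD(x_l/x_{l+1},0)$ structure behind that telescoping needs to be set up explicitly, which your proposal gestures at without establishing.
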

\begin{proof}
Applying Proposition \ref{prop::rpc_construction} inductively, there exists independent processes $\eta^1,\ldots,\eta^{k-1}$ such that $\eta^l \sim PD(x_l/x_{l+1},0)$ for $1 \leq l \leq k-1$ and 
\[ Z = \p(A_2^r|\eta^1,\ldots, \eta^{k-1}) = \left( \prod_{l=r+1}^{k-1} \sum_{m=1}^\infty \eta_m^l(1-\eta_m^l) \right) \sum_{m=1}^\infty (\eta_m^r)^2\]
is the probability that $\xi_1,\xi_2$ overlap at level at $r$ conditional on $\eta^1,\ldots,\eta^{k-1}$.  For $n \geq 2$, observe
\[ Z \leq \p(A_n^r|\eta^1,\ldots,\eta^{k-1}) \leq \frac{n(n-1)}{2} Z.\]
If $\gamma \sim PD(x,0)$ and $n \geq 2$ then (Proposition A.1 of \cite{BS98}, Corollary 2.2(a) of \cite{RU87}),
\[ \E \sum_{m=1}^\infty \gamma_m^n = \frac{(n-1-x)(n-2-x) \cdots (1-x)}{(n-1)!}.\]
Hence if $1 \leq r \leq k-1$,
\begin{align*}
     \E Z
&= \E \left( \prod_{l=r+1}^{k-1} \sum_{m=1}^\infty \eta_m^l (1-\eta_m^l) \right) \sum_{m=1}^\infty (\eta_m^r)^2
  = \left(\prod_{l=r+1}^{k-1}  \left( 1- \E \sum_{m=1}^\infty (\eta_m^l)^2 \right) \right) \E \sum_{m=1}^\infty (\eta_m^r)^2\\
&= \left(\prod_{l=r+1}^{k-1} \frac{x_l}{x_{l+1}} \right) \left(1 - \frac{x_r}{x_{r+1}} \right)
  = \frac{1}{\zeta} (x_{r+1} - x_r).
\end{align*}
Similarly, if $r = 0$,
\begin{align*}
     \E Z
&= \left(\prod_{l=1}^{k-1} \frac{x_l}{x_{l+1}} \right)
 = \frac{1}{\zeta} (x_{1} - x_0).
\end{align*}
\end{proof}

\begin{proof}[Proof of Proposition \ref{prop::rpc_homeo}]
Let $\epsilon > 0$ be arbitrary and suppose $f,g \in X$ are such that $\| f - g\|_{L^1} \leq \epsilon^2$.  Let $(\xi,Q)$ and $(\eta,P)$ be random variables with the laws of $T(f)$ and $T(g)$.  We assume that $(\xi,Q)$ and $(\eta,P)$ are constructed by taking $\xi = \eta$ and $Q$ and $P$ generated from the same Bolthausen-Sznitman coalescent.  Hence the distance between $(\xi,Q)$ and $(\eta,P)$ is completely determined by $d(Q,P)$.  Take $n = [-\log_2 \epsilon ] + 1$ so that $\sum_{i \vee j \geq n+1} 2^{-i-j} \leq \epsilon$.  Then,
\[ d(Q,P) = \sum_{i,j=1}^\infty 2^{-i-j} |q_{ij} - p_{ij}| \leq \max_{1 \leq i,j \leq n} |q_{ij} - p_{ij}| + \epsilon.\]
Let $I_1 = [x_{i_1},x_{i_1+1}), \ldots, I_m = [x_{i_m},x_{i_m+1})$ be the disjoint intervals such that $E = \{|f-g| \geq \epsilon\} = \cup_{j=1}^m I_j$.  Using the notation of Lemma \ref{lem::overlap_prob},
\begin{align*}
     \p(\max_{1 \leq i,j \leq n} |q_{ij} - p_{ij}| \geq \epsilon)
&= \p(A_n^{i_1} \cup \cdots \cup A_n^{i_m})
 \leq C(n,\zeta) \sum_{j=1}^m |x_{i_j} - x_{i_j+1}|\\
&= \frac{n(n-1)}{2\zeta} |E|
  \leq \frac{n(n-1)}{2\zeta} \epsilon.
\end{align*}
Therefore,
\[ \p(d(Q,P) \geq 2\epsilon) \leq \frac{n(n-1)}{2\zeta} \epsilon.\]
The quantity on the right hand side clearly goes to zero as $\epsilon \to 0$.  This proves that $T$ is uniformly continuous.

Now assume $\p(d(Q,P) \geq \epsilon^2) \leq \epsilon$.  With $n = [-\tfrac{1}{2} \log_2 \epsilon]$, note
\[ d(Q,P) \geq 2^{-2n} \max_{1 \leq i,j \leq n} |q_{ij} - p_{ij}| \geq \epsilon \max_{1 \leq i,j \leq n} |q_{ij} - p_{ij}|.\]
Let $E$ and $I_{1},\ldots,I_m$ be as before.  As the collection $A_2^{i_1},\ldots,A_2^{i_m}$ is disjoint,
\[ \p(\max_{1 \leq i,j \leq n} |q_{ij} - p_{ij}| \geq \epsilon) = \p(A_n^{i_1} \cup \cdots \cup A_n^{i_m}) \geq \p(A_2^{i_1} \cup \cdots \cup A_2^{i_m}) = |E|.\]
Trivially, we have the bound,
\[ \| f -g \|_{L^1} \leq \epsilon + |E| \leq 2\epsilon.\]
Therefore $T^{-1}$ is uniformly continuous.
\end{proof}

\section{Approximation}
\label{sec::approx}

In this section we will show that every ROSt which is RQS for $\psi$ linear and non-constant, ergodic for the evolution, with $S_Q \subseteq (-1,1)$ (RQSE) can be approximated arbitrarily well by an RQSE ROSt with finite state space provided that $\overline{S_Q}$ has no limit points from below.  This means that if $q \in \overline{S_Q}$, $\sup\{ p \in \overline{S_Q} : p < q\} < q$.  The proof of Theorem 1.8 of \cite{AA07} implies that the restriction to $\psi(x) = x$ linear is without loss of generality.  Our strategy is to prove that if $g \colon [-1,1] \to [-1,1]$ is any non-decreasing right continuous step function with $g(x) \leq x$ for all $x \in [-1,1]$ and fixing $-1$, $0$, and $1$ then $(\xi,g(Q))$, here and hereafter $g(Q) = (g(q_{ij}))$ denotes entrywise application of $g$, is RQSE.  Establishing this fact consists of three main steps.  First, we will show that $S_Q$ has an $n$th largest element for every $n \leq |S_Q|$ that will necessarily be non-negative.  Second, by a scaling argument we can alter the first $n \leq |S_Q|$ elements of $S_Q$ maintaining positive-definiteness and RQSE so long as the order is preserved.  Third, it is possible to take a kind of limit that allows us to alter the entire state space.

Recall that a random measure $\nu$ on $\CH$, $\CH$ a separable Hilbert space, is said to direct $(\xi,Q)$ provided that conditional on $\nu$, $q_{ij} \stackrel{d}{=} (\phi_i,\phi_j) + \delta_{ij} (1-\|\phi_i\|^2)$ where $(\phi_i)$ is a $\nu$-iid sequence.  It is a consequence of Theorems 1.9 and 1.10 of \cite{AA07} that every RQSE ROSt is directed.  In the following two lemmas we show that the directing measure of such a ROSt is supported in a non-random sphere and that if $\overline{S_Q}$ does not have limit points from below, $\nu$ has countable support.

It is shown in \cite{A_GG07} that for each RQSE ROSt $(\xi,Q)$ and fixed $r, \lambda > 0$ there exists an extension of the probability measure 
\[ d\p_{r,\lambda} = d\p(\xi,Q) \times \prod_{t \geq 0} d \nu_{Q^{*r}}(\kappa(t)),\]
which is the law of $(\xi,Q)$ and its future increments under the $Q^{*r}$ evolution with $\psi(x) = \lambda x$, to $\Omega_{os} \times \prod_{t \in \Z} \R^\N$; the negative indices correspond to the past increments.  The construction follows by showing that the map $\Lambda \colon \Omega_{os} \times \prod_{t \geq -n} \R^\N \to \Omega_{os} \times \prod_{t \geq -n-1} \R^\N$ given by evolving $(\xi,Q)$ by the Gaussian sequence $(\kappa_i(0))$, reordering, and then shifting the index of the increments by $-1$ induces a consistent family of measures.  We abuse notation and write $\p_{r,\lambda}$ for the induced measure on $\Omega_{os} \times \prod_{t \in \Z} \R^\N$.  Then $\Lambda$ extends to a map $\Omega_{os} \times \prod_{t \in \Z} \R^\N \to \Omega_{os} \times \prod_{t \in \Z} \R^\N$ that preserves $\p_{r,\lambda}$ and $\p_{r,\lambda}$ is the unique extension such that the natural extension of $\Lambda$ has this property.  

We can think of the element $\kappa_i(-t)$ as the increment that particle $i$ received $t$ steps in the past.  It is proved in \cite{A_GG07} that the sequence of random sequences $\big( (\kappa_i(-t) : i \in \N) : t \in \N \big)$ is weakly exchangeable conditional on $(\xi,Q)$.  Hence modulo establishing a first moment bound (Proposition 2.4, \cite {A_GG07}), it follows from the strong law of large numbers that the limit
\[ v_i(r,\lambda) = \lim_{T \to \infty} \frac{1}{T} \sum_{t=1}^T \lambda \kappa_i(-t)\]
exists almost surely; $v_i(r,\lambda)$ is referred to as the past velocity of particle $i$ under the evolution \eqref{eqn::qs} with $\psi(x) = \lambda x$ and $(\kappa_i)$ with covariance $Q^{*r}$.  Proposition 2.5 of \cite{A_GG07} gives that $v_i(r,\lambda) = v(r,\lambda)$ a.s., so that the past velocity of the particles is a.s. the same.  It is shown in Lemma 2.7 of \cite{A_GG07} that $v(r,\lambda)$ admits the explicit formula,
\begin{equation}
\label{eqn::past_velocity}
 v(r,\lambda) = \lambda \int_{-1}^1 (1-q^r) dx(q),
\end{equation}
where $x(q) = \E \sum_{i,j} \xi_i \xi_j 1_{\{q_{ij} \leq q\}}$.  Observe that $(r,\lambda) \mapsto v(r,\lambda)$ completely determines $x(q)$.  Using this we can show that the directing measure $\nu$ must be supported on a non-random sphere.

\begin{lemma}
\label{lem::dir_sphere}
If $(\xi,Q)$ is RQSE then its directing measure $\nu$ is supported in a non-random sphere.
\end{lemma}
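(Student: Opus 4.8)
The plan is to deduce the sphere property from the particle-independence of the past velocity (Proposition 2.5 of \cite{A_GG07}), a per-particle refinement of formula \eqref{eqn::past_velocity}, and a Cauchy--Schwarz argument, which together force the squared norms of the Dovbysh--Sudakov marks to take a single non-random value.

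\emph{Set-up.} By Theorems 1.9 and 1.10 of \cite{AA07} the RQSE ROSt $(\xi,Q)$ is directed by a random probability measure $\nu$ on $\CH$; conditionally on $\nu$ the marks $(\phi_i)$ are $\nu$-iid with $\|\phi_i\|^2\le q_{ii}=1$, and $\xi$ is a mixture of Poisson--Dirichlet laws which, given $\nu$, is independent of $(\phi_i)$. For $\phi\in\CH$ let $\mu_\phi$ be the push-forward of $\nu$ by $\psi\mapsto(\phi,\psi)$, a probability measure on $[-1,1]$ supported on the closure of $\{(\phi,\psi):\psi\in\supp\nu\}$, so that $\sup(\supp\mu_\phi)=\sup\{(\phi,\psi):\psi\in\supp\nu\}$. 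The goal is to produce a non-random $c$ with $\|\phi\|=c$ for $\nu$-a.e.\ $\phi$, almost surely.

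\emph{Step 1 (the technical heart).} I would first establish the particle-wise analogue of \eqref{eqn::past_velocity}: the past velocity $v_i(r,\lambda)$ is given by the same formula with the deterministic overlap measure $dx$ replaced by the overlap law seen from particle $i$, which, because given $\nu$ the Gibbs weights $\xi$ are independent of the marks, is exactly $\mu_{\phi_i}$. This should follow by rerunning the cavity computation behind Lemma 2.7 of \cite{A_GG07} conditionally on the mark $\phi_i$ of the tracked particle: conditioning on $\phi_i$ leaves the remaining marks $\nu$-iid and does not disturb the Poisson--Dirichlet structure of $\xi$, and the $\xi$-weighting of the other particles is blind to their marks, so the only overlap feeding the cavity drift of particle $i$ is $(\phi_i,\psi)$ with $\psi\sim\nu$. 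I expect the bookkeeping here to be the main obstacle: one must handle carefully the size-biasing created by conditioning on a particle surviving to the present, and check that the self-overlap $q_{ii}=1$ drops out (being annihilated by the factor $1-q^r$), so that the measure that appears is genuinely the probability measure $\mu_{\phi_i}$ rather than an unnormalized relative of it. Everything after this is soft.

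\emph{Step 2 (conclusion).} By Proposition 2.5 of \cite{A_GG07}, $v_i(r,\lambda)=v(r,\lambda)$ almost surely for every $i$, and $(r,\lambda)\mapsto v(r,\lambda)$ determines the restriction of the deterministic measure $dx$ to $[-1,1)$, whose right endpoint is $\sup S_Q$ --- a non-random constant, since $S_Q$ is invariant under the evolution \eqref{eqn::qs} and $(\xi,Q)$ is ergodic. Since Step 1 shows that $v_i$ determines $\mu_{\phi_i}$ in exactly the way $v$ determines the restriction of $dx$ to $[-1,1)$, the equality $v_i=v$ together with the Weierstrass theorem forces all the $\mu_{\phi_i}$, for $\nu$-a.e.\ $\phi_i$ and almost surely, to coincide with a single non-random probability measure $\mu$ having $\sup(\supp\mu)=\sup S_Q$. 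Now set $R=\sup\{\|\psi\|:\psi\in\supp\nu\}$; Cauchy--Schwarz gives $\sup S_Q\le R^2$, while, since the $\nu$-iid marks are dense in $\supp\nu$, overlaps $(\phi_i,\phi_j)$ come arbitrarily close to $\|\psi\|^2$ for any $\psi\in\supp\nu$, giving $\sup S_Q\ge R^2$; hence $R=\sqrt{\sup S_Q}$ is non-random. Finally, for $\nu$-a.e.\ $\phi$ and any $q>\|\phi\|R$ we have $\mu_\phi((q,1])=\nu(\{\psi:(\phi,\psi)>q\})=0$ by Cauchy--Schwarz and $\|\psi\|\le R$, so $R^2=\sup(\supp\mu_\phi)\le\|\phi\|R$, i.e.\ $\|\phi\|\ge R$; combined with $\|\phi\|\le R$ this forces $\|\phi\|=R$ for $\nu$-a.e.\ $\phi$, almost surely. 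Thus $\nu$ is supported on the non-random sphere of radius $\sqrt{\sup S_Q}$ (in the degenerate case $\sup S_Q=0$ one has $\nu=\delta_0$ and the statement is trivial).
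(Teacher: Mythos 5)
Your overall strategy---deduce the sphere property from the particle-independence of the past velocity---is the same philosophy as the paper's, and your Step~2 (the Cauchy--Schwarz endgame once the $\mu_{\phi_i}$ are known to coincide with a non-random measure) is clean. But Step~1, which you yourself flag as ``the technical heart,'' is where the proposal breaks down, and the problem is not just unfinished bookkeeping: the claimed per-particle formula is inconsistent with the known global formula.

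Concretely, you want $v_i(r,\lambda) = \lambda \int_{-1}^1 (1-q^r)\,d\mu_{\phi_i}(q)$ with $\mu_{\phi_i}$ the push-forward of $\nu$ under $\psi\mapsto(\phi_i,\psi)$. Combined with Proposition 2.5 of \cite{A_GG07} ($v_i = v$ a.s.) and \eqref{eqn::past_velocity}, this forces $\int q^r\,d\mu_{\phi_i} = \int q^r\,dx$ for all $r\ge 1$; since both are probability measures on $[-1,1]$ this means $\mu_{\phi_i} = dx$ by the Hausdorff moment problem. But $dx(q) = \E\sum_{i,j}\xi_i\xi_j 1_{\{q_{ij}\le q\}}$ carries a positive atom at $q=1$ of mass $\E\sum_i\xi_i^2 > 0$ coming from the diagonal, whereas $\mu_{\phi_i}(\{1\}) = \nu(\{\psi : (\phi_i,\psi)=1\})$, which vanishes as soon as $\|\phi_i\|<1$ by Cauchy--Schwarz---and $\|\phi_i\|<1$ is exactly what you go on to conclude, since $\sup S_Q < 1$. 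So the formula as stated cannot hold; the correct per-particle statement, whatever it is, must involve the diagonal and the size-biasing you allude to in a way that changes the measure that appears, and after that repair the equality $v_i=v$ no longer directly gives ``all $\mu_{\phi_i}$ coincide.''

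The paper deliberately avoids needing any per-particle velocity formula. It first shows that $\|\phi_i\|^2$ is a deterministic function of the off-diagonal row $(q_{ij})_{j\neq i}$, hence intrinsic to particle $i$, via a density argument (for any $\epsilon$ one finds $r,s>j$ with $\phi_r\approx\phi_i$, $\phi_s\approx\phi_j$ and bounds $\|\phi_i-\phi_j\|^2$ by off-diagonal overlaps). It then fixes an $\epsilon$-net $\{\rho_k^2\}$ of $[0,1]$, groups the particles into shells according to which net point $\|\phi_i\|^2$ falls nearest, and observes that each non-empty shell, renormalized, is \emph{itself} an RQSE ROSt whose correlated evolution is just the restriction of the original one, hence has the same past velocity $v(r,\lambda)$. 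Applying the \emph{global} formula \eqref{eqn::past_velocity} at the level of each sub-ROSt forces all their overlap distributions $x^k$ to agree, from which only one shell can be non-empty; sending $\epsilon\to 0$ and invoking ergodicity pins $\supp\nu$ to a non-random sphere. So the paper gets by with the already-proved ROSt-level formula, applied to a family of sub-ROSts indexed by the (intrinsic) radius, rather than with a finer particle-level formula that---as the moment computation above shows---one should not expect to take the shape you propose.
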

\begin{proof}
Conditional on $Q$ and $\nu$, we know that there exists a sequence $(\phi_i)$ contained in $\supp \nu$ such that $q_{ij} = (\phi_i,\phi_j) + \delta_{ij} (1-\|\phi_i\|^2)$ that has the property that $\overline{\{\phi_i\}} = \overline{\{\phi_i : i \geq N\}}$ for every $N \in \N$.  In other words, removing a finite number of elements from the sequence $(\phi_i)$ does not change the closure of the corresponding set.  Let $\epsilon > 0$ be arbitrary.  Suppose that $j > i$ is such that 
\[ \sup_{k > j} |q_{ik} - q_{jk} | = \sup_{k > j} |(\phi_i - \phi_j,\phi_k)| \leq \epsilon/4.\]
Since there exists $r > s > j$ such that $\|\phi_i - \phi_r\| \leq \epsilon/8$ and $\|\phi_j - \phi_s\| \leq \epsilon/8$, we have
\begin{align*}
          \| \phi_i - \phi_j \|^2
&\leq |(\phi_i - \phi_j, \phi_r)| + |(\phi_i - \phi_j, \phi_i - \phi_r)| + |(\phi_i - \phi_j, \phi_s)| + |(\phi_i - \phi_j, \phi_j - \phi_s)|\\
&\leq \epsilon.
\end{align*}
This implies that $\| \phi_i \|^2$ is determined completely by $q_{ij}$, $i \neq j$.  In particular, the length $\| \phi_i \|^2$ is intrinsic to the particle $\xi_i$.

Let $\epsilon > 0$ be arbitrary and let $\{\rho_k^2\}$ be an $\epsilon$-net of $[0,1]$.  Let $(\xi^k,Q^k)$ be the ROSt consisting of the particles $\xi_i$ such that $\rho_k^2$ is the largest point in the net smaller than $\| \phi_i \|^2$ normalized to have unit sum and $Q^k$ the corresponding overlap matrix (assuming non-empty).  Note that the event $(\xi^k,Q^k)$ is non-empty occurs with either probability $0$ or $1$ by ergodicity.  Furthermore, each non-empty $(\xi^k,Q^k)$ with probability $1$ has infinitely many particles as conditional on $\nu$ the $(\phi_i)$ are chosen iid.   Observe that $(\xi^k,Q^k)$ is RQSE as its correlated evolution is the evolution of $(\xi,Q)$ restricted to its own particles.  Let $v(r,\lambda)$ denote the velocity of $(\xi,Q)$ and $v^k(r,\lambda)$ the velocity of $(\xi^k,Q^k)$.  Then $v^k(r,\lambda) = v(r,\lambda)$ as the evolution of $(\xi^k,Q^k)$ is just the evolution of $(\xi,Q)$ restricted to $\xi^k$.  Hence it follows that $x^k(q) = \E \sum_{i,j} \xi_i^k \xi_j^k 1_{\{q_{ij}^k \leq q\}}$ is common among all of the non-empty ROSts $(\xi^k,Q^k)$ and equal to $x(q)$.  Therefore there can only be one $k_0$ such that $(\xi^{k_0},Q^{k_0})$ is non-empty.  This implies that $\supp \nu$ is contained in the annulus $\{ \phi \in \CH : \rho_{k_0}^2 \leq \| \phi\|^2 \leq \rho_{k_0}^2+ \epsilon \}$ and it follows from ergodicity that $\rho_{k_0}^2$ is constant.  Hence sending $\epsilon \to 0$ we see that $\nu$ is supported on a non-random sphere.
\end{proof}

\begin{lemma}
\label{lem::dir_det}
Suppose $(\xi,Q)$ is RQSE and directed by $\nu$.  Then the support of $\nu$ is countable.
\end{lemma}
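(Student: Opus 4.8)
The plan is to prove that $\supp \nu$ is a $\delta$-separated subset of $\CH$ for some non-random $\delta > 0$; since $\CH$ is separable, such a set carries a disjoint family of open balls and must therefore be countable. The whole argument is carried out conditionally on $\nu$, and the one place the hypothesis on $\overline{S_Q}$ enters is at the single value $q = \sup S_Q$, which will coincide with the squared radius of the sphere furnished by Lemma \ref{lem::dir_sphere}.

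First I would use Lemma \ref{lem::dir_sphere} to assume $\nu$ is supported in the sphere of some non-random radius $\rho$; if $\supp \nu$ is a single point there is nothing to prove, so assume it has at least two points. Fix a realisation of the $\nu$-iid sequence $(\phi_i)$ with $q_{ij} = (\phi_i,\phi_j)$ for $i \neq j$ and with $(\phi_i)$ dense in $\supp \nu$, which holds almost surely. The first substantive step is to observe that $\rho^2 \in \overline{S_Q}$: for any $\phi \in \supp \nu$ the ball $B(\phi,1/k)$ has positive $\nu$-measure and hence contains $\phi_i$ for infinitely many $i$, so one can pick $i_k \neq j_k$ with $\phi_{i_k},\phi_{j_k} \in B(\phi,1/k)$, whence $q_{i_k j_k} = \rho^2 - \tfrac12\|\phi_{i_k} - \phi_{j_k}\|^2 \to \rho^2$ with each $q_{i_k j_k} \in S_Q$. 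Applying the same construction to two distinct points of $\supp \nu$, using two disjoint balls, exhibits an element of $S_Q$ strictly below $\rho^2$, so $\{p \in \overline{S_Q} : p < \rho^2\}$ is non-empty and $q^* := \sup\{p \in \overline{S_Q} : p < \rho^2\}$ is a real number.

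Next I would invoke the hypothesis at $q = \rho^2$, which gives $q^* < \rho^2$; hence the interval $(q^*,\rho^2)$ is disjoint from $\overline{S_Q}$, and in particular from $S_Q$. Consequently every off-diagonal overlap satisfies $q_{ij} \leq q^*$ or $q_{ij} = \rho^2$, the latter being equivalent to $\phi_i = \phi_j$ by the equality case of Cauchy--Schwarz on the sphere of radius $\rho$. For two \emph{distinct} $\phi,\phi' \in \supp \nu$, approximate $\phi$ and $\phi'$ by $\phi_{i_k}$ and $\phi_{j_k}$ chosen from two disjoint balls — which forces $i_k \neq j_k$ and, for large $k$, $\phi_{i_k} \neq \phi_{j_k}$, hence $q_{i_k j_k} \leq q^*$ — and pass to the limit to get $(\phi,\phi') \leq q^*$. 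Therefore $\|\phi - \phi'\|^2 = 2\rho^2 - 2(\phi,\phi') \geq 2(\rho^2 - q^*) =: \delta^2 > 0$, so $\supp \nu$ is $\delta$-separated and hence countable; since this holds for $\nu$-a.e.\ realisation, $\supp \nu$ is a.s.\ countable.

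The one delicate point I anticipate is the passage from statements about the sampled sequence $(\phi_i)$, where the constraint $i \neq j$ is essential, to statements about all of $\supp \nu$. This is handled by the two remarks used above: a ball about a support point has positive $\nu$-measure, so it contains infinitely many $\phi_i$; and working with disjoint balls about two distinct support points makes the selected sequence points — and hence their indices — automatically distinct, regardless of whether $\nu$ has atoms. Everything else is routine.
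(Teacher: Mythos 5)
Your proof is correct and takes essentially the same route as the paper: use Lemma \ref{lem::dir_sphere} to place $\supp\nu$ on a fixed sphere of radius $\rho$, invoke the no-limit-points-from-below hypothesis at $q=\rho^2$ to get a deterministic gap below $\rho^2$ in $\overline{S_Q}$, deduce a uniform lower bound on $\|\phi-\phi'\|$ for distinct support points, and conclude countability from separability of $\CH$. You are somewhat more careful than the paper at two points it glosses over --- establishing $\rho^2\in\overline{S_Q}$ rather than presuming $S_Q$ has a largest element equal to $\rho^2$, and transferring the $\delta$-separation from the sampled sequence $(\phi_i)$ to all of $\supp\nu$ via disjoint balls --- but these are refinements of the same argument, not a different one.
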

\begin{proof}
By the previous lemma we know that $\nu$ is supported on a non-random sphere of radius $\rho^2$.  Conditional on $\nu$, let $(\phi_i)$ be a $\nu$-iid sequence so that $q_{ij} \stackrel{d}{=} (\phi_i,\phi_j)$, $i \neq j$.  Then $\phi_i = \phi_j$ if and only if $(\phi_i,\phi_j) = \rho^2$.  Let $q_2$ be the second largest element of $S_Q$ (which exists by our hypothesis on $Q$ and is deterministic by ergodicity).  Then if $\phi_i \neq \phi_j$, $(\phi_i,\phi_j) \leq q_2$.  Hence it follows that,
\[ \| \phi_i - \phi_j\|^2 = 2 \rho^2 - 2 (\phi_i,\phi_j) \geq 2 (\rho^2 - q_2) > 0\]
is strictly bounded from below so that the set of points $\{\phi_i : i \in \N\}$ in $\CH$ is discrete.  This can only happen if $\nu$ has countable support as $\CH$ is separable the sequence $(\phi_i)$ is iid.
\end{proof}

Theorems 1.9 and 1.10 of \cite{AA07} give that if $(\xi,Q)$ is RQSE then $\xi$ is distributed as a $PD(x,0)$ variable.  Hence if $\nu$ directs $(\xi,Q)$, conditional on $\nu$ we can view $(\xi,Q)$ as a marked $PD(x,0)$ variable $( (\xi_i,\phi_i) : i \in \N)$ with marking measure $\nu$.  One of the fundamental insights of \cite{AA07} (Lemma 2.7) is that the one step evolution of $(\xi,Q)$ under \eqref{eqn::qs} where $(\kappa_i)$ has covariance $Q^{*r}$ is also a marked $PD(x,0)$ random variable but with marking measure
\[ \nu'(d\phi) = \frac{e^{\lambda \kappa(T_r\phi)}}{N} \nu(d\phi).\]
Here, $N$ is a normalization so that $\nu'$ is a probability, $T_r \colon \CH \colon \to \CH$ is a map such that $(T_r \phi, T_r \psi) = (\phi,\psi)^r$, and $\kappa$ denotes an isomorphism of $\CH$ onto a Gaussian Hilbert space independent of $\xi$.  As $(\xi,Q)$ is RQS, the ROSt $(\xi,Q')$ with $Q'$ directed by $\nu'$ must have the same law as $(\xi,Q)$.  Hence we arrive at,

\begin{proposition}
\label{prop::rescale_direct}
Suppose $(\xi,Q)$ is a ROSt, $\xi \sim PD(x,0)$, directed by $\nu$.  Then $(\xi,Q)$ is RQS if and only if $\nu$ satisfies,
\begin{eqnarray}
\label{eqn::dir_eqn}
\nu(d\phi) \stackrel{d}{=} \frac{e^{\lambda \kappa(T_r \phi)}}{N} \nu(d\phi) \text{ for all } \lambda > 0, r \in \N
\end{eqnarray}
up to isometry of $\CH$.
In particular, if $S_Q \subseteq (-1,1)$, $0 < \alpha < (\sup S_Q)^{-1/2}$, and $f_\alpha \colon [-1,1] \to [-1,1]$ is the function
\[ f_\alpha(x) = \left\{ \begin{array}{cl} 1 \text{ for } x = 1\\ \alpha^2 x \text{ for } x \in (-1,1)\\ -1 \text{ for } x = -1 \end{array} \right.,\]
then $(\xi,f_\alpha(Q))$ is RQS for all $\lambda > 0$.
\end{proposition}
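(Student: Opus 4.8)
The plan is to prove the two assertions separately, with the equivalence characterization of RQS in terms of the directing measure doing most of the work. For the first assertion, suppose $(\xi,Q)$ is RQS and directed by $\nu$. By definition, conditional on $\nu$ a $\nu$-iid sequence $(\phi_i)$ realizes $q_{ij} \stackrel{d}{=} (\phi_i,\phi_j) + \delta_{ij}(1-\|\phi_i\|^2)$. I would first unwind exactly what the one-step $Q^{*r}$-evolution with $\psi(x) = \lambda x$ does to this marked $PD(x,0)$ picture: by Lemma 2.7 of \cite{AA07}, as recalled in the paragraph preceding the proposition, the evolved ROSt is again a marked $PD(x,0)$ variable, now with marking measure $\nu'(d\phi) = N^{-1} e^{\lambda \kappa(T_r\phi)}\nu(d\phi)$, where $T_r$ satisfies $(T_r\phi,T_r\psi) = (\phi,\psi)^r$ and $\kappa$ is the canonical Gaussian Hilbert space isomorphism on $\CH$ independent of $\xi$. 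Robust quasi-stationarity says precisely that for every $\lambda > 0$ and $r \in \N$, the ROSt $(\xi,Q')$ directed by $\nu'$ has the same law as $(\xi,Q)$. Since the directing measure of a ROSt is unique only up to an isometry of $\CH$ (this is part of the de Finetti-type representation in \cite{DS82} invoked in \cite{AA07}), this translates exactly into the stated condition \eqref{eqn::dir_eqn}. Conversely, if $\nu$ satisfies \eqref{eqn::dir_eqn}, then running the same computation forward shows the evolved marking measure has the same law as $\nu$ up to isometry, hence the evolved ROSt has the same law as $(\xi,Q)$; since this holds for every $r,\lambda$ it is RQS. I would also note that $\xi\sim PD(x,0)$ is preserved automatically because the evolution of a marked $PD(x,0)$ variable is again marked $PD(x,0)$ — so no separate argument for the $\xi$-marginal is needed.

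For the second assertion, the idea is to apply the characterization to a rescaled directing measure. Given $(\xi,Q)$ RQS with $S_Q\subseteq(-1,1)$ and directing measure $\nu$, by Lemma \ref{lem::dir_sphere} we may assume $\nu$ is supported on the sphere of some radius $\rho^2$; note $\rho^2 = \sup S_Q$ since the supremum of $(\phi_i,\phi_j)$ over $i\neq j$ equals $\rho^2$ on the support (using that removing finitely many $\phi_i$ does not change the closure of the set). For $0 < \alpha < (\sup S_Q)^{-1/2}$, let $\nu_\alpha$ be the pushforward of $\nu$ under the dilation $\phi\mapsto\alpha\phi$ of $\CH$; this is again a random probability measure on $\CH$, now supported on the sphere of radius $\alpha^2\rho^2 = \alpha^2\sup S_Q < 1$. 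A $\nu_\alpha$-iid sequence is $(\alpha\phi_i)$, so the ROSt it directs has overlaps $(\alpha\phi_i,\alpha\phi_j) = \alpha^2(\phi_i,\phi_j) = \alpha^2 q_{ij}$ off-diagonal and $1$ on the diagonal — that is, precisely $f_\alpha(Q)$. So $(\xi,f_\alpha(Q))$ is a ROSt directed by $\nu_\alpha$ with $\xi\sim PD(x,0)$, and by the first part it suffices to verify that $\nu_\alpha$ satisfies \eqref{eqn::dir_eqn}.

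The verification is where dilation interacts with $T_r$ and $\kappa$, and this is the step I expect to require the most care. The point is that $\nu$ satisfies \eqref{eqn::dir_eqn}: for all $\lambda,r$, $\nu(d\phi)\stackrel{d}{=} N^{-1}e^{\lambda\kappa(T_r\phi)}\nu(d\phi)$ up to isometry. Pushing forward under $\phi\mapsto\alpha\phi$ and using that $\kappa$ is a \emph{linear} isomorphism onto a Gaussian Hilbert space — so $\kappa(T_r(\alpha\phi)) = \kappa(\alpha^r T_r\phi) = \alpha^r\kappa(T_r\phi)$ in law, since $(T_r(\alpha\phi), T_r(\alpha\psi)) = \alpha^{2r}(\phi,\psi)^r$ means $T_r(\alpha\cdot)$ and $\alpha^r T_r(\cdot)$ have the same Gram matrices and hence agree up to isometry — I get $\nu_\alpha(d\phi)\stackrel{d}{=} N^{-1}e^{\lambda\alpha^r\kappa(T_r\phi)}\nu_\alpha(d\phi)$ up to isometry. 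Replacing $\lambda$ by $\lambda\alpha^{-r}$ (legitimate since $\lambda\alpha^{-r} > 0$ ranges over all of $(0,\infty)$ as $\lambda$ does) recovers \eqref{eqn::dir_eqn} for $\nu_\alpha$ with parameters $\lambda, r$. Hence $\nu_\alpha$ satisfies the fixed-point equation for every $\lambda > 0$ and $r\in\N$, so by the first part $(\xi,f_\alpha(Q))$ is RQS. The main obstacle is making the "up to isometry" bookkeeping precise — one must track that at each stage the relevant objects ($T_r(\alpha\phi)$ versus $\alpha^r T_r\phi$, and the pushed-forward measures) are compared under a genuine isometry of $\CH$ that is consistent with the Gaussian structure $\kappa$ — but since $\kappa$ is defined only through the inner product (it is an isometry onto a Gaussian Hilbert space), any Hilbert-space isometry automatically intertwines the $\kappa$'s, so the bookkeeping closes.
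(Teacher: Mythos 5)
Your proof is correct and follows essentially the same route as the paper: the first part is read off from the marked-$PD(x,0)$ picture and Lemma 2.7 of \cite{AA07}, and the second part is the change-of-variables computation showing the dilated directing measure still solves \eqref{eqn::dir_eqn} after reparameterizing $\lambda$. One small slip: when the fixed-point identity for $\nu$ is pushed forward under $\phi \mapsto \alpha\phi$, the weight in the new coordinate comes from $T_r(\alpha^{-1}\phi)$, giving $e^{\lambda\alpha^{-r}\kappa(T_r\phi)}$ rather than your $e^{\lambda\alpha^{r}\kappa(T_r\phi)}$; the sign of the exponent on $\alpha$ is reversed, but since you then replace $\lambda$ by a positive multiple the conclusion is unaffected.
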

\begin{proof}
The first part is an immediate consequence of the above discussion; see also Theorem 4.2 of \cite{AA07}.  The second part follows from the scaling properties of \eqref{eqn::dir_eqn}.  Indeed, let $\nu'$ be given by scaling the support of $\nu$ by $\alpha$.  Then $\nu'$ satisfies,
\begin{align*}
     \nu'(d\phi)
&= \nu(d \alpha^{-1} \phi) 
   \stackrel{d}{=} \frac{e^{\lambda \kappa(T_r  \alpha^{-1} \phi)}}{N} \nu( d \alpha^{-1} \phi)
 = \frac{e^{ \lambda \alpha^{-r} \kappa(T_r \phi)}}{N} \nu'(d\phi),
\end{align*}
up to isometry.
\end{proof}

Arguing as in \cite{AA07}, by Lemmas \ref{lem::dir_sphere} and \ref{lem::dir_det} we can associate with $\nu = \sum_j \eta_j \delta_{\psi_j}$ a \emph{directing ROSt} $(\eta,P)$ with $P = (p_{ij})$, $p_{ij} = (\psi_i,\psi_j)/ \sup S_Q$.  That $\nu$ satisfies \eqref{eqn::dir_eqn} implies $(\eta,P)$ is RQSE since the evolution of the weights $\eta = (\eta_n)$ under the $Q^{*r}$ evolution of $(\xi,Q)$ corresponds to,
\[ (\eta,P) \mapsto \left( \left(\frac{\eta_i e^{\lambda' \kappa_i}}{\sum_{j} \eta_j e^{\lambda'  \kappa_j}} \right)_\downarrow, P^\pi \right)\]
where $(\kappa_i)$ is a Gaussian sequence with covariance $P^{*r}$, $\lambda' =  \lambda(\sup S_Q)^{r/2}$.  Indeed, the argument is similar to the proof of Proposition 4.3 of \cite{AA07}.  One does have to be careful here since in principle it could be that $-1 \in S_P$.  This, however, leads to a contradiction since if $p_{ij} = -1$ then the future and hence past increments of particle $i$ are exactly $-1$ times those of $j$.  In particular, if $v(r,\lambda)$ denotes the $(r,\lambda)$-velocity with respect to the evolution of $(\eta,P)$, then $v_i(1,1) = -v_j(1,1)$ so that $v_i(1,1) = v_j(1,1) = v(1,1) = 0$ and it is clear from \eqref{eqn::past_velocity} that $v(1,1) > 0$.  If $|S_Q| = \infty$ and $\overline{S_Q}$ does not have limit points from below, then $\overline{S_P} = \overline{\{ q/ \sup S_Q : q \in S_Q \setminus \{\sup S_Q\}\}}$ also has this property.  Hence this procedure can be iterated arbitrarily many times.  In particular, $S_Q$ has an $n$th largest element $a_n$ that is necessarily non-negative; note that $a_n$ can be expressed in terms of the radii of the support of the first $n$ directing measures.  The idea of the proof of the following proposition is to combine this with the second part of the previous proposition to show that we can change the top of $S_Q$ while preserving the property of RQSE.  Let $X$ be the set of non-decreasing, right-continuous step functions $f$ on $[-1,1]$ satisfying $f(x) \leq x$ for every $x \in [-1,1]$ and fixing $-1$, $0$, and $1$.

\begin{proposition}
\label{prop::change_state_space}
Suppose $(\xi,Q)$ is RQSE, $|S_Q| = \infty$, $\overline{S_Q}$ does not have limit points from below, and $f \in X$.  Then $(\xi,f(Q))$ is RQSE and indecomposable.
\end{proposition}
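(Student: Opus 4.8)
The plan is to build $(\xi,f(Q))$ from $(\xi,Q)$ by a finite sequence of operations on the directing tower, each of which visibly preserves the property RQSE, and then to identify the result as a finite level RPC, which automatically gives ergodicity and indecomposability.

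First I would record two reductions. Since $S_Q\subseteq(-1,1)$ and $\overline{S_Q}$ has no limit points from below, $S_Q$ is well ordered in decreasing order, $S_Q=\{a_1>a_2>\cdots\}$ with $a_n\downarrow L:=\inf S_Q\ge 0$, and, as already established before the statement, each $a_n$ is non-negative and the peeling of directing ROSts can be iterated arbitrarily, producing RQSE directing ROSts $(\xi,Q)=(\eta^{(0)},P^{(0)}),(\eta^{(1)},P^{(1)}),\dots$ with $\eta^{(k)}\sim PD(x^{(k)},0)$ and $S_{P^{(k)}}$ a fixed rescaling of $\{a_{k+1},a_{k+2},\dots\}$. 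Second, because $f\in X$ has finite range, the non-increasing sequence $(f(a_n))_{n}$ is eventually constant, say $f(a_n)=w$ for all $n\ge N$, and $w\le L<1$; thus $S_{f(Q)}$ is finite and $f$ modifies only the top $N-1$ levels of the tower while collapsing the remainder to the single value $w$.

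The key input I would use is that re-stacking a sphere over an RQSE ROSt is again RQSE: given an RQSE ROSt $(\eta,P)$, a radius $\rho\in(0,1)$ and a parameter $x'\in(0,1)$, form the directing measure $\nu=\sum_j\eta_j\delta_{\psi_j}$ with $\|\psi_j\|^2=\rho$ and mark a $PD(x',0)$ variable by $\nu$; by Proposition~\ref{prop::rescale_direct} the resulting ROSt is RQS if and only if $\nu$ satisfies \eqref{eqn::dir_eqn}, and the computation in the paragraph following Proposition~\ref{prop::rescale_direct} (compare Proposition 4.3 of \cite{AA07}) shows that under $\nu$ the induced evolution of the weights $(\eta_n)$ is precisely the correlated evolution of $(\eta,P)$ with $\lambda$ replaced by $\lambda\rho^{r/2}$, a fixed point of $\Phi_{r,\lambda}$ for every $r\in\N$ and $\lambda>0$. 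Since peeling and re-stacking are mutually inverse once the extra parameter $x'$ is tracked, $(\xi,Q)$ is recovered in law by re-stacking its tower, the $\ell$-th sphere having radius $a_\ell/a_{\ell-1}$ (with $a_0:=1$) and a parameter inherited from the tower. The next step is to replace $(\eta^{(N-1)},P^{(N-1)})$ by the ROSt with the same $PD(x^{(N-1)},0)$ weights and deterministic constant overlap $\rho_0:=w/a_{N-1}\in[0,1)$: this is a one-level RPC if $\rho_0>0$ and a $PD$ ROSt with identity overlaps if $\rho_0=0$, hence RQSE by \cite{AA07} and \cite{A07}. Then re-stack over it the spheres corresponding to the distinct values $b_1>b_2>\cdots$ among $f(a_1),\dots,f(a_{N-1})$ --- when $f$ sends a block of consecutive $a_\ell$ to a common value one re-stacks a single coarser sphere for the whole block, choosing its parameter to be the largest of the corresponding tower parameters, so that the top parameter, and hence the law $\xi\sim PD(x^{(0)},0)$, is unchanged. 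Each re-stacking preserves RQSE, so the resulting ROSt is RQS; and, being obtained by re-stacking spheres over a one-level RPC, it is a finite level RPC, hence ergodic and indecomposable. Finally this ROSt has the law of $(\xi,f(Q))$ because on both sides the overlap of a pair of particles that first agree at tower-level $\ell$ equals $f(a_\ell)$, interpreted as $w$ when $\ell\ge N$; concretely, indecomposability holds since every level of the built hierarchy branches into the infinitely many atoms of a Poisson-Dirichlet process, so every particle realizes $w$, with infinitely many partners inside its depth-$(N-1)$ block, as well as every $b_i$, whence $S_{f(Q)}(i)=S_{f(Q)}$ for all $i$.

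The hard part is the assertion that re-stacking --- and, when $\rho_0=0$, passing to the constant overlap ROSt --- preserves the fixed point property simultaneously for the whole family $\{\Phi_{r,\lambda}: r\in\N,\ \lambda>0\}$ rather than for a single map, that is, that the modification of $(\xi,Q)$ stays robustly quasi-stationary; the scaling identity $\lambda\mapsto\lambda\rho^{r/2}$ behind Proposition~\ref{prop::rescale_direct} is exactly what makes this work for all $r$ at once, but one must also carry the Poisson-Dirichlet parameters through each peeling and re-stacking so that the law of $\xi$ is genuinely preserved and the sequence of parameters remains a valid RPC parameter sequence throughout.
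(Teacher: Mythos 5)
Your approach is genuinely different from the paper's and, if the bookkeeping is done carefully, would give a cleaner and in fact stronger conclusion. The paper proves the proposition in two steps: first it shows that an RQSE ROSt stays RQSE when $g$ modifies only the top of $S_Q$ (i.e.\ $g(x)=x$ for $x<a_N$ and $g$ has finite range above $a_N$, so that the state space of $g(Q)$ is still infinite), by peeling down to $(\xi^{N+1},Q^{N+1})$ and re-stacking with new scaling factors $\alpha_1,\dots,\alpha_{N+1}$; then it runs a connectedness argument on the family $g_\alpha(x)=f(x)\vee\alpha$ for $x\ge\alpha$, $g_\alpha(x)=x$ for $x<\alpha$, showing the set $A=\{\alpha:(\xi,g_\alpha(Q))\in\CA\}$ is non-empty, closed (via Proposition~\ref{prop::rqs_closed}) and open, hence $A=[-1,1]$. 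You instead observe that $f$ has finite range, so $(f(a_n))$ is eventually constant equal to some $w$, and you replace the tail of the directing tower by a one-level RPC and re-stack; this identifies $(\xi,f(Q))$ directly with a finite-level RPC, so RQSE, ergodicity and indecomposability all come for free, and no topological argument is needed. Both proofs hinge on the same re-stacking lemma (Proposition~\ref{prop::rescale_direct} and the surrounding discussion), but your proof directly yields that $(\xi,f(Q))$ is an RPC, which the paper only obtains in the proof of Theorem~\ref{thm::main_theorem} by appealing to Theorem~1.8 of~\cite{AA07}.

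A few corrections and gaps. First, there is an arithmetic error in $\rho_0$: the scaling factors in the re-stack must multiply out to $f(a_\ell)$ for $\ell\le N-1$, so the product of all re-stacked radii is $f(a_{N-1})$, and to make the deepest off-diagonal overlap equal to $w$ you need $\rho_0=w/f(a_{N-1})$, not $w/a_{N-1}$; these differ whenever $f(a_{N-1})<a_{N-1}$. Second, your handling of merged levels is not correct as stated: if $f$ identifies $a_\ell$ and $a_{\ell+1}$, the tower parameter of the merged level is the product $x^{(\ell)}x^{(\ell+1)}$ of the two tower parameters, not their maximum (equivalently, in the $x_1<\dots<x_k$ RPC parameterization one simply deletes the redundant $x$-parameter; the recovery formula $x_{k-\ell}=x^{(0)}\cdots x^{(\ell)}$ shows the surviving RPC parameters remain strictly increasing, but the corresponding tower parameter is the product). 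Third, the identification $(\xi,f(Q))\stackrel{d}{=}$ the re-stacked RPC, which you flag as the hard part, does need to be argued: the point is that $f(q_{ij})$ depends on the pair $(i,j)$ only through the level $\ell(i,j)$ at which their ancestral lines in the tower first coalesce, that $\ell(i,j)\ge N$ always maps to $w$, and that the tree of ancestor indices up to level $N-1$ has exactly the nested-$PD$ law of the RPC tree because at each stage the marks are drawn iid from the weights $\eta^{(k)}\sim PD(x^{(k)},0)$; after the collapse the level-$(N-1)$ directing ROSt has deterministic overlap, so weights and overlaps are trivially independent there, and the two laws agree. Spelling this out (together with checking the positive semidefiniteness of $f(Q)$, which then follows because the RPC overlap matrix is PSD) would complete the proof.
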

Note that it is not \emph{a priori} clear that the matrix $f(Q)$ should even be positive semidefinite; this, however, will be immediate from the proof.  In the following, we say that $(\xi,Q)$ is directed by $(\eta,P)$ with scaling factor $\rho$ provided $\xi$ is independent of $(\eta,P)$ and conditional on $(\eta,P)$, $q_{ij} \stackrel{d}{=} \rho p_{i^* j^*}$, $ i \neq j$, where $(i^*)$ is a sequence of $\eta$-iid random integers.
\begin{proof}
Let $a_n \geq 0$ be the $n$th largest element of $S_Q$.  Suppose that $g \colon [-1,1] \to [-1,1]$ is any right-continuous non-decreasing function such that $g(1) = 1$, $g(x) = x$ for all $x < a_N$ for some fixed $N \in \N$, and $g|[x,1]$ has finite range.    Our first claim is that $(\xi,g(Q))$ is RQSE.  To see this, let $\nu_1$ be the directing measure of $(\xi,Q)$ and $(\xi^1,Q^1)$ the associated directing ROSt.  Inductively let $\nu_k$ direct $(\xi^{k-1},Q^{k-1})$ for each $k \geq 2$.  Choose $\alpha_1,\ldots,\alpha_{N+1} \geq 0$ so that $\alpha_1 = g(a_1)$, $\alpha_1 \alpha_2 = g(a_2),\ldots, \alpha_1 \cdots \alpha_{N} = g(a_N)$ and $\alpha_1 \cdots \alpha_{N+1} = a_{N+1} < g(a_N)$.  Inductively set $(\xi^{k},P^k)$ to be the ROSt directed by $(\xi^{k+1},P^{k+1})$ with scaling factor $\alpha_{k+1}$ for $0 \leq k \leq N$ and $P^{N+1} = Q^{N+1}$.  Then $(\xi,P) = (\xi^0,P^0) = (\xi,g(Q))$, which proves the claim.

Let $\CA$ be the set of ROSts that are RQS,
\[ g_\alpha(x) = \left\{ \begin{array}{cl} f(x) \vee \alpha \text{ for } x \geq \alpha\\ x \text{ for } x < \alpha \end{array}\right. ,\]
and $A = \{ \alpha \in [-1,1] : (\xi,g_\alpha(Q)) \in \CA\}$.  The previous part gives us that $A \neq \emptyset$.  As $\CA$ is closed (Proposition \ref{prop::rqs_closed}) and, as $f \in X$, $\alpha \mapsto (\xi,g_\alpha(Q))$ is continuous hence $A$ is closed.  We just need to show that $A$ is open.  Suppose $\alpha \in A$ so that $(\xi,g_\alpha(Q)) \in \CA$.  The case when $|S_{g_\alpha(Q)}| < \infty$ is trivial.  Suppose $|S_{g_\alpha(Q)}| = \infty$.  As $(\xi,Q)$ is ergodic it follows that $(\xi,g_\alpha(Q))$ is as well and hence is RQSE.  Furthermore, $\overline{S_{g_\alpha(Q)}}$ does not have limit points from below.  Therefore $S_{g_\alpha(Q)}$ has an $n$th largest element $b_n \geq 0$ for every $n$.  As $g_\alpha$ takes on only a finite number of values for $x \geq \alpha$, there exists $N$ large enough so that $b_N < \alpha$.  Hence if $|\beta - \alpha| < \alpha - b_N$, $\beta \in A$.  Therefore $A$ is open, so that $A = [-1,1]$.

An immediate consequence of Lemma \ref{lem::dir_sphere} and an easy induction argument is that every RQSE ROSt with finite state space is indecomposable.  Hence the last part of the proposition follows.
\end{proof}

\noindent We can now prove Theorem \ref{thm::main_theorem}.

\begin{proof}[Proof of Theorem \ref{thm::main_theorem}]
Suppose $(\xi,Q)$ is RQSE.  By Proposition \ref{prop::change_state_space}, $(\xi,f(Q))$ is also RQSE and indecomposable with $f \in X$ provided $-1 < f(x) < 1$ for $x \in (-1,1)$.  Letting $\epsilon > 0$ be arbitrary we can choose such $f$ so that $d((\xi,Q),(\xi,f(Q))< \epsilon$.  By Theorem 1.8 of \cite{AA07} $(\xi,f(Q))$ is given by a finite level RPC.  By the discussion following Proposition \ref{prop::rpc_homeo}, we know that the closure of the set of RPCs with fixed Poisson-Dirichlet parameter is the set of continuous RPCs with the same parameter.  Therefore $(\xi,Q)$ is a continuous RPC.
\end{proof}

\section*{Acknowledgments}
I would like to thank Amir Dembo and Andrea Montanari for teaching a course in spin-glass models at Stanford University in the winter of 2008 that led to the this research problem as well as providing invaluable guidance in the preparation of this article.  I would also like to thank Louis-Pierre Arguin for helpful discussions and e-mail correspondence.

\bibliographystyle{plain.bst}	
\bibliography{qs_references}	

\begin{thebibliography}{10}

\bibitem{AS203}
M.~Aizenman, R.~Sims, and S.~Starr.
\newblock An extended variational principle for the {S}herrington-{K}irkpatrick
  spin-glass model.
\newblock {\em Phys. Rev. B}, 68:214403, 2003.

\bibitem{AS207}
M.~Aizenman, R.~Sims, and S.~Starr.
\newblock {M}ean-{F}ield {S}pin {G}lass models from the {C}avity-{ROS}t
  {P}erspective.
\newblock In J.C. Mourao, J.P. Nunes, R.~Picken, and J-C Zambrini, editors,
  {\em Prospects in Mathematical Physics}, volume 437 of {\em AMS Contemporary
  Mathematics Series}. AMS, 2007.

\bibitem{A_GG07}
L.-P. Arguin.
\newblock Competing particle systems and the {G}hirlanda-{G}uerra identities.
\newblock 2007.

\bibitem{A07}
L.-P. Arguin.
\newblock A dynamical characterization of {P}oisson-{D}irichlet distributions.
\newblock {\em Electron. Comm. Probab.}, 12:283--290, 2007.

\bibitem{A08}
L.-P. Arguin.
\newblock Private Communication, 2008.

\bibitem{AA07}
L.-P. Arguin and M.~Aizenman.
\newblock On the structure of quasi-stationary competing particles systems.
\newblock 2007.
\newblock To appear in Ann. Probab.

\bibitem{BS98}
E.~Bolthausen and A.-S. Sznitman.
\newblock On {R}uelle's probability cascades and an abstract cavity method.
\newblock {\em Comm. Math. Phys.}, 197(2):247--276, 1998.

\bibitem{DS82}
L.~Dovbysh and V.~Sudakov.
\newblock {G}ram-de {F}inetti matrices.
\newblock {\em Journal of Mathematical Sciences}, 27(5):3047--3054, 1984.

\bibitem{HJ85}
R.~Horn and C.~Johnson.
\newblock {\em Matrix Analysis}.
\newblock Cambridge University Press, 1985.

\bibitem{PA80}
G.~Parisi.
\newblock The order parameter for spin glasses: A function on the interval
  $0-1$.
\newblock {\em J. Phys. A: Math. Gen.}, 13:1101--1112, 1980.

\bibitem{RU87}
D.~Ruelle.
\newblock A mathematical reformulation of {D}errida's {REM} and {GREM}.
\newblock {\em Comm. Math Phys.}, 108(2):225--239, 1987.

\bibitem{RA05}
A.~Ruzmaikina and M.~Aizenman.
\newblock Characterization of invariant measures at the leading edge for
  competing particle systems.
\newblock {\em Ann. Probab.}, 33(1):82--113, 2005.

\bibitem{TAL03}
M.~Talagrand.
\newblock {\em Spin Glasses: A Challenge for Mathematicians: Cavity and Mean
  Field Models}.
\newblock Springer-Verlag, 2003.

\bibitem{TAL06}
M.~Talagrand.
\newblock The {P}arisi formula.
\newblock {\em Ann. Math.}, 163(2):221--263, 2006.

\end{thebibliography}

\end{document}